\newtheorem{theorem}{Theorem}
\newtheorem{prop}[theorem]{Proposition}
\newtheorem{lemma}[theorem]{Lemma}
\newcommand{\R}{\mathbb{R}}%
\begin{document}

\title{Tumor containment for Norton-Simon models}

\author{Frank Ernesto Alvarez
\thanks{GMM, INSA Toulouse, 135 Avenue de Rangueil, 31400 Toulouse, France \emph{E-mail:}  alvarez-borg $\alpha\tau$ insa-toulouse.fr}
\and Yannick Viossat\thanks{CEREMADE, Universit\'e Paris Dauphine-PSL, Paris, Place du Mar\'echal de Lattre de Tassigny, F-75775 Paris, France \emph{E-mail:} viossat $\alpha\tau$ ceremade.dauphine.fr
} 
}

\maketitle
\begin{abstract} 
\begin{spacing}{1.2}
\noindent 
Some clinical and pre-clinical data suggests that treating some tumors at a mild, patient-specific dose might delay resistance to treatment and increase survival time. A recent mathematical model with sensitive and resistant tumor cells identified conditions under which a treatment aiming at tumor containment rather than eradication is indeed optimal. This model however neglected mutations from sensitive to resistant cells, and assumed that the growth-rate of sensitive cells is non-increasing in the size of the resistant population. The latter is not true in standard models of chemotherapy. 
This article shows how to dispense with this assumption and allow for mutations from sensitive to resistant cells. This is achieved by a novel mathematical analysis comparing tumor sizes across treatments not as a function of time, but as a function of the resistant population size.

\medskip

\noindent\emph{Keywords:} Adaptive Therapy, tumor containment, Norton-Simon model, mathematical oncology

\end{spacing}
\end{abstract}

\maketitle

\section{Introduction}

The dominant paradigm in cancer therapy is to treat tumors aggressively. This makes sense if the tumor is curable, but might be counter-productive otherwise. Indeed, tumors contain a large number of cells, some of which may be resistant to treatment. By killing preferentially the most sensitive cells, an aggressive treatment could free resistant cells from competition with sensitive cells, allowing them to develop quickly: a phenomenon called competitive release in ecology (Gatenby 2009b \cite{Gatenby2009b}, Enriquez-Navas et al., 2016 \cite{Enriquez-Navas2016}, Cunningham et al. 2019 \cite{Cunningham2019}).

This led researchers to suggest that, at least for some tumors, treating at, or close to, the maximal tolerated dose should be replaced by treating at the minimal effective dose; that is, the minimal dose that allows to stabilize tumor size, subject to a sufficient quality of life of the patient. The aim is to slow down the growth of resistant cells, by maintaining competition with sensitive cells. 

This idea, which is part of the broader framework of cancer adaptive therapy (Gatenby 2009 \cite{Gatenby2009a}), 
has been tested in vitro, in mice models and on human patients suffering from metastatic castrate-resistant prostate 
cancer  (Gatenby et al. 2009  \cite{Gatenby2009a}, Silva et al. 2012 \cite{Silva2012}, Enriquez-Navas et al. 2016 \cite{Enriquez-Navas2016}, Zhang et al. 2017 \cite{Zhang2017}: trial NCT02415621, Bacevic and Noble et al. 2017 \cite{Bacevic2017a}, Smalley et al. 2019 \cite{Smalley2019}, 
Strobl et al. 2020 \cite{Strobl2020a}, Bondarenko et al. 2021 \cite{Bondarenko2021}, 
Wang et al. 2021a, 2021b \cite{Wang2021a, Wang2021b}, Farrokhian et al. 2022 \cite{Farrokhian2022}). 
Other clinical trials are ongoing or starting in prostate cancer (NCT03511196, NCT05393791), melanoma (NCT03543969), 
rhabdomyosarcoma (NCT04388839) and ovarian cancer (ACTOv/NCT05080556)\footnote{The initial prostate cancer trial has been debated (Mistry 2021 \cite{Mistry2021}, Zhang et al. 2021 \cite{Zhang2021})}.

On the theoretical side, several mathematical models of tumor containment have been studied (e.g., Martin et al. 1992 \cite{Martin1992a}, 
Monro and Gaffney 2009 \cite{Monro2009}, Gatenby et al. 2009 \cite{Gatenby2009a}, Silva et al. 2012 \cite{Silva2012}, Carrère 2017 \cite{Carrere2017}, Zhang et al. 2017 \cite{Zhang2017}, Bacevic and Noble et al. 2017 \cite{Bacevic2017a}, Hansen et al. 2017 \cite{Hansen2017}, Gallaher et al. 2018 \cite{Gallaher2018b}, Cunningham et al. 2018 \cite{Cunningham2018}, Pouchol 2018 \cite{Pouchol2018}, Carrère and Zidani 2020 \cite{Carrere2020}, Strobl et al. 2020 \cite{Strobl2020a}, Cunningham et al. 2020 \cite{Cunningham2020}) leading to the first workshop on Cancer Adaptive Therapy Models (CATMo; \href{https://catmo2020.org/}{https://catmo2020.org/}). However, many of these models make very specific assumptions, e.g., logistic tumor growth with a specific effect of intra-tumor competition and a specific treatment kill-rate (Zhang et al., 2017 \cite{Zhang2017}, Cunningham et al. 2018 \cite{Cunningham2018}, Carrère 2017 \cite{Carrere2017}, Strobl et al. 2020 \cite{Strobl2020a}). This makes it difficult to generalize their conclusions. 

Viossat and Noble (2021) \cite{Viossat2021} recently analysed a more general model with two types of tumor cells: sensitive and fully resistant to treatment. The model takes the form: 
\begin{equation}
\begin{split}
\frac{dS}{dt}(t)  & = S(t) g_S(S(t), R(t), L(t)) \\
\frac{dR}{dt}(t)  & = R(t) g_R (S(t), R(t))
\label{VN_model}
\end{split}
\tag{Model 1}
\end{equation}
where $S(t)$ and $R(t)$ are the total number of sensitive and resistant cells at time $t$, $L(t)$ is the current dose or treatment level, 
and $g_S$ and $g_r$ are per-cell growth-rate functions. They identified qualitative assumptions under which, among other results, 
containing the tumor at its initial size maximizes the time at which the tumor becomes larger than at the beginning of treatment 
(for an idealized form of containment) or is close to maximizing it (for a more realistic form). Similarly, an idealized form of containment 
at a larger threshold size maximizes the time at which tumor size becomes larger than this threshold. By contrast, eliminating all sensitive cells 
at treatment initiation - an idealized form of an aggressive treatment - leads to the quickest time to progression beyond any threshold size, among all treatments that eliminate sensitive cells before this threshold size is crossed.

Some of the assumptions of Viossat and Noble are however debatable. In particular, they assume that the higher the number of resistant cells, the lower the growth rate of sensitive cells. 
Formally, function $g_S$ is non-increasing in $R$.
This assumption helps to compare the size of sensitive
populations across treatments. To see why, assume that
sensitive cells hamper the growth of resistant cells (that is, 
$g_R$ is non-increasing in $S$), and consider two constant dose
treatments, with doses $L_1$ and $L_2 > L_1$, respectively, and
the same initial conditions. Since treatment 2 is more
aggressive, it initially leads to a smaller sensitive
population, hence a larger resistant population than treatment
1: for $t >0$ small enough, $S_2(t) <  S_1(t)$ and 
$R_2(t) \geq R_1(t)$. 
If the growth-rate of sensitive cells $g_S$
is non-increasing in $R$, the fact that treatment 2 is more
aggressive and leads to a larger resistant population both
negatively affect the sensitive population under treatment 2,
ensuring that the sensitive population remains smaller under
treatment 2 than under treatment 1: $S_2 < S_1$. This itself
ensures that $R_2$ remains larger than $R_1$. The inequalities
$S_2 <  S_1$ and $R_2 \geq R_1$ thus propagate, and hold for all
times $t >0$. By contrast, if the growth-rate of sensitive
cells $g_S$ increases with $R$, the fact that $R_2 \geq R_1$ might
boost the growth of sensitive cells under treatment 2, 
even though treatment 2 is more aggressive. But if the
sensitive population becomes larger under treatment 2,  
the inequality $R_2 \geq R_1$ might also cease to hold, 
and the whole argument of Viossat and Noble seems to break. 

Unfortunately, assuming $g_S$ non-increasing in $R$, 
which may seem a natural consequence of competition between tumor cells, is actually problematic. Indeed, it is not satisfied in the Gompertzian model from Monro and Gaffney (2009) \cite{Monro2009} that Viossat and Noble use for simulations: 
\begin{equation}
\begin{split}
\frac{dS}{dt}(t)  & = \rho  \ln(K/N(t)) \, (1 - L(t)) S(t),\\
\frac{dR}{dt}(t)  & = \rho \ln(K/N(t)) \, R(t),
\label{MG_model}
\end{split}
\tag{Model 2}
\end{equation}
where $N(t) = S(t) + R(t)$ is the total number of tumor cells. More precisely, 
in the absence of treatment ($L(t) =0$), the growth-rate of sensitive cells is decreasing in $N$, hence in $R$; 
however, if the treatment level is high enough ($L(t)>1$), the opposite happens, and a large resistant population slows down the regression of the sensitive population.  

This reflects the fact that chemotherapy typically attacks cells that are actively dividing. For various reasons (e.g., boundary growth), a larger tumor size is thought to be associated with a lower growth-fraction, 
i.e., a lower proportion of cells actively dividing (Laird 1964 \cite{Laird1964}; Norton and Simon 1977 \cite{Norton1977}; Gerlee 2013 \cite{Gerlee2013}). Thus the presence of additional resistant cells, 
by making the tumor larger, makes more sensitive cells quiescent, and shields them against the effect of treatment. As a result, the growth rate of sensitive cells is not always decreasing in $R$, 
and the assumptions of Viossat and Noble are not satisfied. The problem occurs for all Norton-Simon models (Norton and Simon 1977 \cite{Norton1977}), where the growth of the sensitive population takes the form:  
$$\frac{dS}{dt}(t) = S(t) g(N(t)) (1 - L(t))$$ 
for some per-cell growth rate function $g$. It also occurs for birth-death models with a Norton-Simon treatment kill-rate (Strobl et al. 2020 \cite{Strobl2020a}):  
$$\frac{dS}{dt}(t) = S(t) \left[b(N(t)) (1 - L(t)) - d(N(t))\right]$$  
where $b(N)$ and $d(N)$ are birth- and death-rates in the absence of treatment. 

Another issue is that \ref{VN_model} does not consider mutations from sensitive to resistant cells. This is problematic because one of the theoretical motivations for aggressive treatments is to decrease tumor size in order to limit the number of reproduction events, hence of possible appearance of resistant cells by mutation. Key-contributions to the tumor containment literature analyzed the trade-off between increasing competition (by allowing many sensitive cells to survive) and decreasing the number of mutations from sensitive to resistant cells (Martin et al. 1992\cite{Martin1992a}, Hansen et al. 2017\cite{Hansen2017}) 

The purpose of our work is to generalize the results of Viossat and Noble
to models that encompass Norton and Simon models, and, at least to a
certain extent, allow for mutations from sensitive to resistant cells. 
Mathematically, this is achieved by formulating the model in terms of
absolute growth-rates and, more importantly, 
by replacing a direct analysis of the evolution through time of the number
of sensitive and resistant cells, $S(t)$ and $R(t)$, by an analysis of the
induced trajectory in what we call the $R-N$ plane, where $N = S+ R$
describes the total tumor size. These trajectories describe the evolution
of the total size $N$ of the tumor as a function of the 
size $R$ of the resistant population. This turns out to be an efficient
technique, allowing to generalize essentially all results of Viossat and
Noble, including the optimality or near-optimality of containment treatments. 

The remainder of this article is organized as follows: the model is
described in the next section. Results are presented in Section
\ref{sec:res}, proved in Section \ref{sec:proofs} and discussed in Section
\ref{sec:disc}. The Appendix elaborates on the extent to which our model allows for mutations from sensitive to resistant cells, and derives the comparison principle on which our results are based. 

\section{Model}
\label{sec:model}
We consider a model with two types of tumors cells: sensitive to treatment, and fully resistant. Their growth is described by differential equations of the form: 
\begin{equation}
\begin{split}
\frac{dS}{dt}(t) & =  \phi_S(S(t), R(t), L(t)), \quad S(0)= S_0 \geq 0\\
\frac{dR}{dt}(t) & =  \phi_R(S(t), R(t)), \qquad \quad R(0)= R_0 >0
\label{VA_model}
\end{split}
\tag{Model 3}
\end{equation}
where $\phi_S$ and $\phi_r$ are continuously differentiable absolute
growth-rate functions. The quantities $\phi_S(0,R,L)$ and $\phi_R(S,0,L)$
are assumed non-negative to ensure that population sizes cannot become
negative. Let $N(t)= S(t) + R(t)$ and $N_0 = S_0 + R_0$. We make the following assumptions: 
\begin{itemize}
\item The patient dies when tumor size reaches a critical size $N_{crit} > N_0$.
\footnote{This assumption is standard but debatable (Mistry, 2020): this will be the topic of some other work.}
\item The size of an untreated tumor increases: $\phi_S(S, R, 0) + \phi_R(S, R) > 0$ if $N \leq N_{crit}$.
\item The higher the treatment level, the lower the growth-rate of sensitive cells: $\phi_S$ is non-increasing in $L$.  
\item The resistant population keeps growing: $\phi_R(S,R) >0$ whenever $R>0$ and $N \leq N_{crit}$, so that the tumor is incurable if, as we assume, resistant cells are initially present. 
\item If $R \geq R_0$ and $N \leq N_{crit}$, for a given number of resistant cells, the larger the sensitive population, 
the lower the growth-rate of resistant cells: $\phi_R$ is non-increasing in $S$. 
\end{itemize}

This last assumption models competition for resources (space, glucose, oxygen) or some other form of inhibition of resistant cells by sensitive cells (Bondarenko et al, 2021 \cite{Bondarenko2021}). 
It neither forbids nor implies a cost of resistance, i.e., that in the absence of treatment, resistant cells grow slower than sensitive cells. In particular, we do not specify whether resistant cells compete more strongly with sensitive cells or with other resistant cells. 

The difference with Viossat and Noble (2021) \cite{Viossat2021} is two-fold: first, the model is formulated in terms of absolute growth-rates, allowing for mutations from sensitive to resistant cells and back.

Second, we make no assumption on how the growth-rate of sensitive cells depends on the number of resistant cells. In particular, $\phi_S$ is not assumed non-increasing in $R$. This model encompasses many previous models (Silva et al. 2012 \cite{Silva2012}, Carrère, 2017 \cite{Carrere2017}, Bacevic and Noble et al. 2017 \cite{Bacevic2017a}, Hansen et al. 2017 \cite{Hansen2017}, Strobl et al. 2020 \cite{Strobl2020a}), including \ref{MG_model}, its original formulation with mutations (Monro and Gaffney, 2009 \cite{Monro2009}), or explicit birth-death models with or without a Norton and Simon treatment effect (Strobl et al. 2020 \cite{Strobl2020a}).

To analyse \ref{VA_model}, it is useful to rewrite it in the equivalent form:
\begin{equation}
\begin{split}
\frac{dN}{dt}(t) & =  f_N(N(t), R(t), L(t))\\
\frac{dR}{dt}(t) & =  f_R(N(t), R(t))
\label{VA_modelbis}
\end{split}
\tag{Model 4}
\end{equation}
where $f_N(N, R, L) = \phi_S(N-R, R, L) + \phi_R(N-R, R)$ and $f_R(N, R) =  \phi_R(N-R, R)$. 
Our main assumptions are then that, on the domain $R_0 \leq R \leq N \leq N_{crit}$, $f_N$ is non-increasing in $L$, positive if $L=0$, and $f_R$ is positive and non-increasing in $N$. 
We also assume that the treatment level cannot be larger than a constant $L_{max}$ (the treatment level corresponding to the maximal tolerated dose).  Other assumptions are technical: 
\begin{itemize}
\item $f_N$ and $f_R$ are continuously differentiable (on a neighborhood of the relevant domain: $R_0 \leq R \leq N \leq N_{crit}$ and $0 \leq L \leq L_{max}$). 
\item $R(t)$ remains smaller than $N(t)$ (this must be biologically, and follows from our assumption on \ref{VA_model} that $\phi_S(0,R,L)$ is nonnegative).
\item The treatment function $L(\cdot)$ is strongly piecewise continuously differentiable (our vocabulary) in the following sense:  
there exists a positive integer $m$ and times $t_0 = 0 < t_1 < ... < t_m$ such that, on each interval $[t_k, t_{k+1})$, $k \in \{0,..., m-1\}$, and on $[t_m, +\infty)$, $L$ coincides with a continuously differentiable function defined on a neighborhood of this interval.
\end{itemize}
This ensures among other things that, for a given initial condition and treatment, there is a unique solution to \ref{VA_modelbis}. To fix ideas, we assume that the solutions $R(t)$ and $N(t)$ are defined for all times (though they have no clear interpretation once $N(t) > N_{crit}$), and that they remain bounded. Both properties can be ensured by modifying growth-rate functions $f_N$ and $f_R$ on the domain $N > N_{crit}$. 
This is without loss of generality since patients are then assumed already deceased. 

\paragraph{Outcomes and treatments. } We compare the effect of various treatments on the time at which tumor size becomes larger than a given threshold. Depending on this threshold, this may correspond to: 
\begin{itemize}
\item time to progression, defined as the time at which tumor size progresses beyond its initial size $N_0$.\footnote{In the response evaluation criteria in solid tumors (RECIST), progressive disease is defined by a 20\% increase in the sum of the largest diameters (LD) of target lesions, compared to the smallest LD sum recorded since the beginning of treatment. However, comparing to the  smallest LD sum recorded would not be fair to aggressive treatments, and the 20\% margin makes sense in medical practice, to take into account imperfect monitoring and imperfect forecast of treatment effect, but not for our deterministic mathematical model.} 
\item time to treatment failure: the time at which tumor size progresses beyond an hypothetical maximal tolerable tumor size $N_{tol} \geq N_0$, 
after which the life of the patient is considered at risk or side-effects of the disease are too strong.\footnote{The assumption $N_{tol} \geq N_0$ in without loss of generality in the following sense: if the initial size is larger than the maximal tolerable size, then all treatments we consider would treat at $L_{max}$ until tumor size becomes tolerable ($N = N_{tol}$), and we could apply our analysis from that point on.}
\item survival time, 
defined as the time at which tumor size becomes larger than a critical size $N_{crit} \geq N_{tol}$. 
\end{itemize}
Mathematically, results on time to progression and survival time may be obtained through results on time to treatment failure by taking $N_{tol} = N_0$, or $N_{tol} = N_{crit}$, respectively. For this reason, we focus on time to treatment failure. 
 
We consider the following treatments: 

$\bullet$ Constant dose treatments, 
including \emph{No treatment} (noTreat): $L(t) = 0$, and \emph{Maximal Tolerated Dose} (MTD): $L(t) = L_{max}$ throughout. 

$\bullet$ \emph{Delayed MTD} (del-MTD): do not treat until $N=N_{tol}$ for the first time, then treat at $L_{max}$ for ever. 

$\bullet$ \emph{Containment at $N_{tol}$} (Cont): do not treat until $N = N_{tol}$ and then stabilize tumor size at $N_{tol}$, as long as possible with a treatment level $L(t) \leq L_{max}$. 
Finally, treat at $L_{max}$ when $N > N_{tol}$. Formally, during the stabilization phase, the treatment level is chosen so that $dN/dt = 0$ (e.g., $L(t)= N(t)/S(t)$ in \ref{MG_model}). Containment treatments are illustrated in Fig. \ref{fig:3}, see also Fig. 1 of Viossat and Noble.\footnote{If, after crossing $N_{tol}$, tumor size comes back to $N_{tol}$, then the containment treatment stabilizes tumor size at $N_{tol}$ again, 
as long as possible. Similar remarks apply to intermittent containment or other variants of containment.}
    \begin{figure}[htbp]
    \centering
     \includegraphics[scale=0.31]{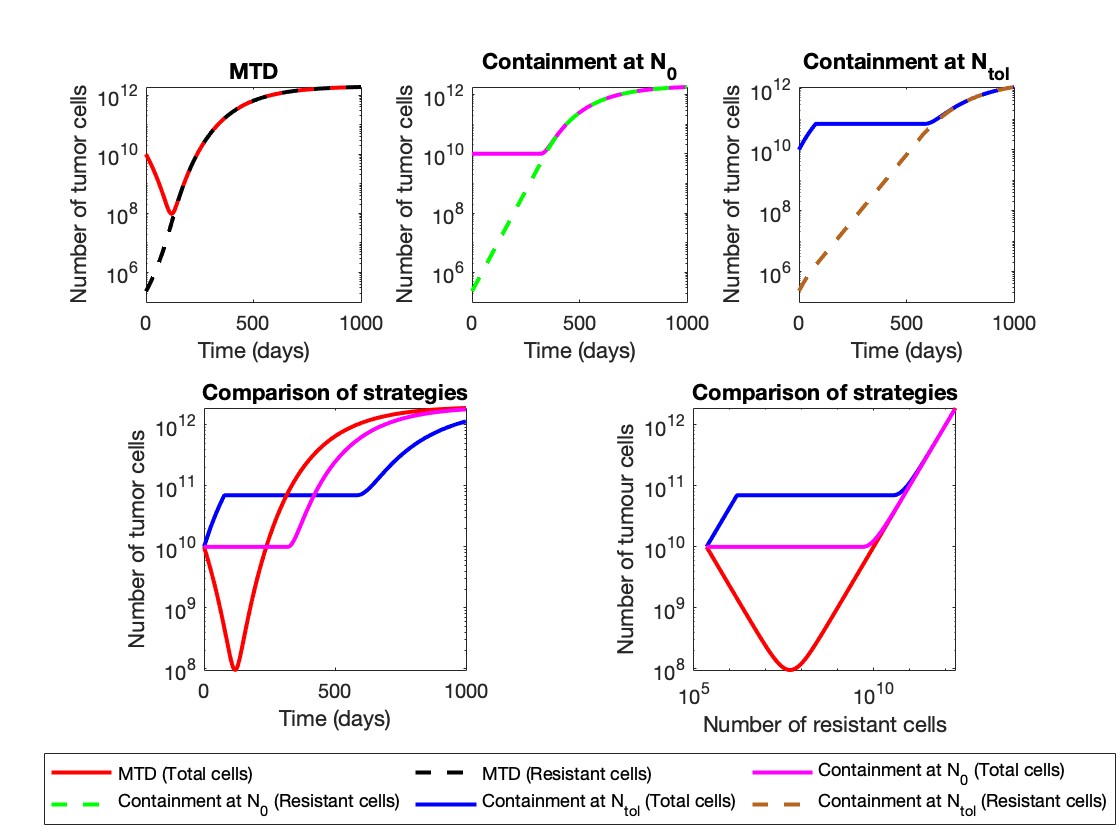}
     \caption{Number of resistant cells and number of tumor cells for different treatments. Top row: number of resistant cells and number of tumor cells as a function of time under MTD (left), Containment at the initial size $N_0$ (center), and Containment at the maximal tolerable size $N_{tol}$ (right). Bottom-row:  number of tumor cells under these three treatments as a function of time (left) or as a function of the number of resistant cells (right).}
     \label{fig:3}
 \end{figure}
$\bullet$ \emph{Intermittent containment} (Int), as in the prostate cancer clinical trial of Zhang et al. (2017): do not treat until $N = N_{tol}$, then treat at $L_{max}$ until $N=N_{min} < N_{tol}$, then interrupt treatment until $N=N_{tol}$, and iterate as long as possible. Finally, treat at $L_{max}$ when $N > N_{tol}$. This is illustrated by Fig. \ref{fig:2}.\footnote{The above description is to fix ideas: our results are still valid for any other way of maintaining tumor size between $N_{min}$ and $N_{tol}$, as long as this may be done with a dose $L(t) \leq L_{max}$.}
    \begin{figure}[htbp]
    \centering
     \includegraphics[scale=0.2]{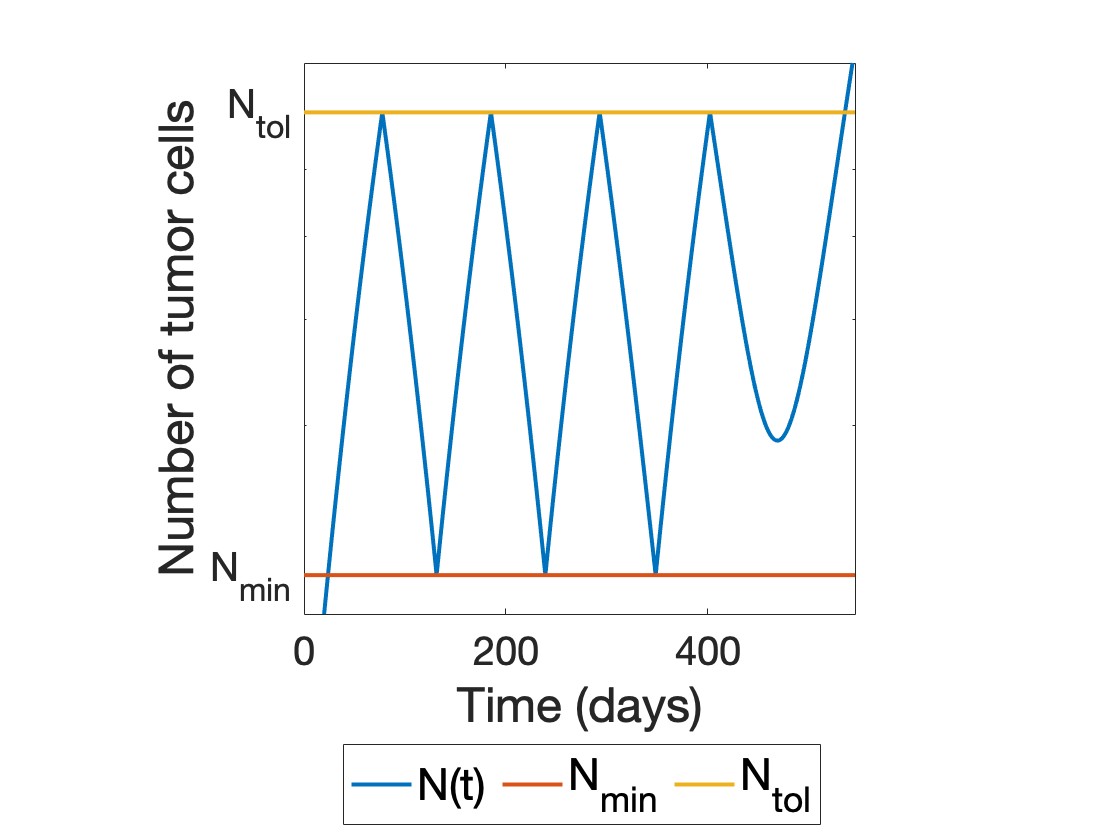}
     \caption{Total size under Intermittent treatment.}
     \label{fig:2}
 \end{figure}
$\bullet$ An arbitrary treatment, called the \emph{alternative treatment} (alt): we only assume that $0 \leq L(t) \leq L_{max}$ for all $t$ and $L(t) = L_{max}$ if $N>N_{tol}$.

The times to treatment failure under these treatments will be denoted by $t_{noTreat}$, $t_{MTD}$, $t_{delMTD}$, $t_{Cont}$, $t_{Int}$, 
and $t_{alt}$, respectively. 

Following (Martin et al. 2012 \cite{Martin1992}, Hansen et al. 2017 \cite{Hansen2017}, Viossat and Noble 2021 \cite{Viossat2021}), we also consider idealized treatments, which assume that the sensitive population may be reduced arbitrarily quickly.  
These treatments are not realistic but are useful theoretical references. \emph{Ideal MTD}(idMTD) eliminates all sensitive cells instantaneously at the beginning of treatment. 
\emph{Delayed ideal MTD} (del-idMTD) lets tumor grow to $N_{tol}$ and then eliminates all sensitive cells. \emph{Ideal containment} (idCont) lets tumor size grow to $N_{tol}$, 
and then stabilizes it as long as some sensitive cells remain. Finally, \emph{Ideal intermittent containment} (idInt) lets tumor size grow to $N_{tol}$ and then maintains it between 
$N_{min} \leq N_{tol}$ and $N_{tol}$ as long as some sensitive cells remain.\footnote{Viossat and Noble assumed to fixed ideas and for simulations that, each time tumor size 
reaches $N_{tol}$, it drops instantaneously to $N_{min}$, or to $R(t)$ if $R(t) > N_{min}$, but this is not needed.} 

Under these idealized treatments, treatment fails (i.e. tumor size progresses beyond $N_{tol}$) when the resistant population reaches size $N_{tol}$. Sensitive cells have then been fully eliminated. Times to treatment failure are denoted by $t_{idMTD}$, $t_{del-idMTD}$, $t_{idCont}$, 
and $t_{idInt}$, respectively.\footnote{When comparing idealized treatments to an alternative treatment, for the comparison to be fair, we do not restrict treatment level under the alternative treatment either, and allow it to eliminate sensitive cells arbitrarily quickly.} 

To make our life easy, we assume that all treatments we consider may be implemented through a piecewise continuously differentiable treatment level function $L(t)$ (up to possible downward jumps in the sensitive population for idealized treatments), instead of deriving this result from the implicit function theorem and appropriate regularity assumptions. 

\section{Results}
\label{sec:res}
We show that, up to natural additional assumptions for comparisons of sensitive cell populations, all results of Viossat and Noble on \ref{VN_model} still hold on \ref{VA_model} (or equivalently \ref{VA_modelbis}), in spite of our less restrictive assumptions. 
The results are described below and proved in the next section. 

The key point is that if treatment level is never larger than a given constant for treatment 1, and never smaller than the same constant for treatment 2, then the resistant population is no larger under treatment 1 than under treatment 2. 
\begin{prop} \label{prop:key}
Consider solutions of \ref{VA_modelbis} associated to two treatments $L_1(t)$ and $L_2(t)$.\footnote{Unless mentioned otherwise, when comparing two treatments, we assume the same initial conditions: $R_1(0) = R_2(0)$ and $N_1(0) = N_2(0)$.} 
If there exists a constant $\bar{L}$ such that for all $t \geq 0$, $L_1(t) \leq \bar{L} \leq L_2(t)$, then $R_1(t) \leq R_2(t)$ for all $t \geq 0$.  
\end{prop}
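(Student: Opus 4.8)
The plan is to break the circularity inherent in the time-domain argument by passing to the $R$-$N$ plane, as advertised in the introduction. Since $f_R(N,R) > 0$ on the relevant domain $R_0 \le R \le N \le N_{crit}$, the resistant population $R_i(t)$ is strictly increasing along each trajectory $i \in \{1,2\}$, hence invertible as a function of time. I would therefore reparametrize each trajectory by $R$ instead of $t$, writing $N_i = \nu_i(R)$ for the total tumor size as a function of the resistant population size. By the chain rule, $\nu_i$ is continuous, piecewise $C^1$ (the finitely many treatment switches cause no trouble), and satisfies, off the switch points,
\begin{equation}
\frac{d\nu_i}{dR} = \frac{f_N(\nu_i, R, L_i)}{f_R(\nu_i, R)}, \qquad \nu_i(R_0) = N_0,
\end{equation}
where $L_i$ is evaluated at the (trajectory-dependent) time at which the resistant population equals $R$.

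The second step is a scalar comparison in this plane. Introduce the reference field $F(N,R) := f_N(N,R,\bar{L})/f_R(N,R)$. Because $f_N$ is non-increasing in $L$ and $f_R > 0$, along trajectory 1 (where $L_1 \le \bar{L}$ at every time, hence at every corresponding $R$) one has $d\nu_1/dR \ge F(\nu_1, R)$, while along trajectory 2 (where $L_2 \ge \bar{L}$) one has $d\nu_2/dR \le F(\nu_2, R)$. Thus $\nu_1$ is a supersolution and $\nu_2$ a subsolution of $dN/dR = F(N,R)$ sharing the initial value $N_0$ at $R = R_0$. Since $f_N, f_R$ are continuously differentiable and $f_R$ is bounded away from $0$ on the relevant domain, $F$ is locally Lipschitz in $N$, and the standard scalar comparison principle for ODEs (the one derived in the Appendix) yields $\nu_1(R) \ge \nu_2(R)$ for all $R \ge R_0$: at any fixed resistant size, the less aggressive treatment leaves a larger total tumor.

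The final step converts this $R$-indexed inequality into the time-indexed claim, and is where the remaining hypothesis on $f_R$ enters. Let $\tau_i(R)$ denote the time at which $R_i$ reaches $R$, so that $\tau_i'(R) = 1/f_R(\nu_i(R), R)$. Since $\nu_1(R) \ge \nu_2(R)$ and $f_R$ is non-increasing in $N$, we get $f_R(\nu_1(R),R) \le f_R(\nu_2(R),R)$, hence $\tau_1'(R) \ge \tau_2'(R)$; integrating from $R_0$ with $\tau_1(R_0) = \tau_2(R_0) = 0$ gives $\tau_1(R) \ge \tau_2(R)$. In words, treatment 1 needs more time to reach any given resistant size. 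Evaluating at $R = R_1(t)$ gives $\tau_2(R_1(t)) \le t$, and monotonicity of $R_2$ then yields $R_2(t) \ge R_1(t)$, which is the assertion.

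The main obstacle is conceptual rather than computational: in the time domain one must propagate $N_2 \le N_1$ and $R_2 \ge R_1$ simultaneously, but their coupling is not sign-definite once $f_N$ (equivalently $\phi_S$) may depend non-monotonically on $R$, so the propagation argument of Viossat and Noble collapses. The whole value of the $R$-$N$ reformulation is that it decouples the two comparisons: the supersolution/subsolution estimate for $\nu_i(R)$ uses only monotonicity of $f_N$ in $L$ and positivity of $f_R$, and never any monotonicity of $f_N$ in $R$, while monotonicity of $f_R$ in $N$ is quarantined to the final time-change step. Some care is still needed at the boundary of the relevant domain, to keep the trajectories inside the region $R \le N \le N_{crit}$ where the hypotheses hold and where $R$ stays strictly increasing; I would handle this by running the comparison only up to the first failure time, beyond which the convention that the patient is already deceased makes the statement vacuous.
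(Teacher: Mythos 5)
Your proof is correct, and it rests on the same central idea as the paper's: reparametrize each trajectory by the resistant population size and compare tumor sizes in the $R$--$N$ plane before returning to the time variable. The execution of both halves, however, differs from the paper's in ways worth recording. For the $R$--$N$ comparison, the paper (Lemma \ref{lem:CompC}) shows that $\tilde{N}_1$ is a supersolution of the exact equation solved by $\tilde{N}_2$, namely $d\tilde{N}/dr = f_N(\tilde{N},r,\tilde{L}_2(r))/f_R(\tilde{N},r)$, which forces it to invoke the piecewise-$C^1$ variant of the comparison principle (Proposition \ref{prop:cp}, item a) because $\tilde{L}_2$ is only strongly piecewise $C^1$; you instead sandwich both trajectories around the fixed reference field $f_N(N,r,\bar{L})/f_R(N,r)$, which is genuinely $C^1$, so only the standard scalar comparison principle is needed --- at the small price of either inserting the exact solution of the reference equation between your supersolution and your subsolution, or quoting the supersolution-versus-subsolution form of the principle, since Proposition \ref{prop:cpstandard} as stated compares a solution with a subsolution. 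Your version is cleaner for the constant-$\bar{L}$ statement, but unlike the paper's it does not cover the weaker hypothesis $\tilde{L}_1(r)\leq \tilde{L}_2(r)$ that Lemma \ref{lem:CompC} also handles and that is reused later (e.g.\ in the proof of Proposition \ref{prop:alt1}). For the return to the time variable, the paper (Lemma \ref{L1L2}) compares $R_1$ and $R_2$ as a subsolution and a solution of the autonomous equation $dR/dt=f_R(\tilde{N}_2(R),R)$, which again requires an ad hoc comparison argument (Proposition \ref{prop:cp}, item b); your time-change argument --- integrating $\tau_i'(R)=1/f_R(\nu_i(R),R)$ to get $\tau_1\geq\tau_2$ and then undoing the change of variables --- is more elementary and bypasses the comparison principle entirely at this stage. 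The one loose end is that $\tau_2(R_1(t))$ must be well defined, i.e.\ you need $R_1(t)<R_2^{\infty}$ for all $t$; this does follow from your own inequality (if $R_1^{\infty}>R_2^{\infty}$ then $\tau_2(R)\to+\infty$ as $R\to R_2^{\infty}$ while $\tau_1(R)$ stays bounded there, contradicting $\tau_1\geq\tau_2$), but it should be stated, as the paper does explicitly at the corresponding point of Lemma \ref{L1L2}.
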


It follows that for constant dose treatments, lowering the dose or delaying treatment leads to a lower resistant population:
\begin{prop}\label{prop:constantdelay} (constant dose treatments)\\
a) Consider two constant dose treatments $L_1(t) = L_1$ and $L_2(t) = L_2$. If $L_2 \geq L_1$, then $R_1(t) \leq R_2(t)$ for all $t \geq 0$.\footnote{Exceptionally, we assume that even when $N_i >N_{tol}$, the dose stays equal to $L_i$ and is not increased to $L_{max}$. A similar remark applies to b)} 

b) Assume that $L_1(t) = L > 0$ for all $t \geq 0$, while $L_2(t) = 0$ until $N = N_{start} \geq N_0$, and then $L_2(t) =L$. Then $R_1(t) \leq R_2(t)$ for all $t \geq 0$. 
\end{prop}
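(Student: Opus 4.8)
The plan for both parts is to specialize the comparison principle of Proposition~\ref{prop:key}: in each case I would exhibit a single threshold dose $\bar L$ that separates the two schedules pointwise in time, and then quote that proposition. The only real content is verifying the separating inequality; no fresh dynamical argument is required.

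For part a), the two doses are constant with $L_1 \le L_2$, so any $\bar L \in [L_1, L_2]$ works — I would take $\bar L = L_1$, so that $L_1(t) = L_1 \le \bar L \le L_2 = L_2(t)$ for every $t \ge 0$. Proposition~\ref{prop:key} then delivers $R_1(t) \le R_2(t)$ for all $t \ge 0$ in one step.

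For part b), the idea is to set $\bar L = L$ and to match the delayed schedule with the low-dose treatment and the constant dose with the high-dose treatment in Proposition~\ref{prop:key}. The delayed schedule applies dose $0$ while $N < N_{start}$ and dose $L$ afterwards, so its value always lies in $\{0, L\}$ and never exceeds $\bar L = L$; the constant schedule equals $\bar L = L$ at every instant, hence satisfies $L(t) \ge \bar L$. With the delayed schedule in the role of treatment~1 and the constant dose in the role of treatment~2, the hypothesis $L_1(t) \le \bar L \le L_2(t)$ of Proposition~\ref{prop:key} holds for all $t \ge 0$, and I would conclude $R_1(t) \le R_2(t)$ for all $t \ge 0$.

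The point to watch is that the separating inequality must hold at every time, including across the instant at which the delayed schedule switches treatment on; since that switch only raises the delayed dose from $0$ up to $L = \bar L$, never above $\bar L$, the comparison persists throughout. The piecewise-differentiable form of the delayed schedule is harmless, because Proposition~\ref{prop:key} is already stated for this class of treatment functions. I therefore expect no genuine obstacle here: the delicate part is purely bookkeeping — attaching the smaller resistant population to the delayed (lower-dose) schedule so that the conclusion reads $R_1(t) \le R_2(t)$ — after which the comparison principle finishes the argument.
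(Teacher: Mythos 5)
Your strategy coincides with the paper's: Proposition \ref{prop:constantdelay} is obtained there as a direct specialization of Proposition \ref{prop:key}, and your part a) (any $\bar L \in [L_1,L_2]$ separates the two constant schedules) is exactly that argument, so part a) is correct and needs no further comment.

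Part b) is where your ``purely bookkeeping'' remark conceals a real discrepancy. Your application of Proposition \ref{prop:key} is the right one --- the delayed schedule never exceeds $\bar L = L$, the constant schedule never falls below it --- and it yields that the \emph{delayed} treatment has the smaller resistant population at all times. But you are not free to put ``the delayed schedule in the role of treatment 1'': in the statement being proved, the labels are fixed by the hypothesis, with $L_1(t)=L$ the constant dose and $L_2$ the delayed one. Under those labels, what your (and the paper's) argument actually proves is $R_2(t) \le R_1(t)$, the \emph{reverse} of the printed conclusion $R_1(t) \le R_2(t)$. Moreover, the printed conclusion is not merely unproved but false in general: under the constant dose the sensitive population is suppressed from time $0$, so competition on resistant cells is relieved sooner; for instance in \ref{MG_model} with $S_0>0$ (where $f_R$ is strictly decreasing in $N$), one has $N_1(t)<N_2(t)$ and consequently $R_1(t)>R_2(t)$ for small $t>0$. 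The resolution is that the statement of b) as printed has its labels (equivalently, its inequality) interchanged --- the sentence introducing the proposition, ``delaying treatment leads to a lower resistant population,'' confirms that the intended conclusion is precisely the one you derived. So your mathematics is sound and in substance identical to the paper's one-line proof, but instead of silently re-assigning roles so that the conclusion ``reads'' $R_1 \le R_2$, you should state that under the given hypothesis the conclusion is $R_2(t) \le R_1(t)$ for all $t \ge 0$, i.e., flag the typo rather than paper over it.
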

 
Proposition \ref{prop:key} also implies that not treating minimizes the resistant population while MTD maximizes it: 
\begin{prop}\label{prop:alt0} (MTD maximisez resistance)\\ 
For all $t \geq 0$, $R_{noTreat}(t) \leq R_{alt}(t) \leq R_{MTD}(t)$.
\end{prop}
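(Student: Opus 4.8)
The plan is to obtain both inequalities as immediate consequences of Proposition~\ref{prop:key}, simply by choosing the right comparison constant $\bar{L}$ in each case. All three treatments share the same initial conditions $R(0) = R_0$ and $N(0) = N_0$, so the hypothesis of Proposition~\ref{prop:key} on initial data is met throughout.

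For the lower bound $R_{noTreat}(t) \leq R_{alt}(t)$, I would take $L_1(t) = 0$ (the noTreat profile) and $L_2(t)$ equal to the alternative treatment, setting $\bar{L} = 0$. Since the alternative treatment satisfies $0 \leq L(t) \leq L_{max}$ by definition, we have $L_1(t) = 0 \leq \bar{L} \leq L_2(t)$ for all $t \geq 0$, and Proposition~\ref{prop:key} yields $R_{noTreat}(t) \leq R_{alt}(t)$.

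For the upper bound $R_{alt}(t) \leq R_{MTD}(t)$, I would instead let $L_1(t)$ be the alternative treatment, $L_2(t) = L_{max}$ (the MTD profile), and set $\bar{L} = L_{max}$. The constraint $L(t) \leq L_{max}$ on the alternative treatment gives $L_1(t) \leq \bar{L} = L_2(t)$ for all $t \geq 0$, so Proposition~\ref{prop:key} delivers $R_{alt}(t) \leq R_{MTD}(t)$. Chaining the two inequalities completes the proof.

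There is essentially no obstacle here beyond correctly identifying the two comparison constants; the entire content is packaged inside Proposition~\ref{prop:key}. The only point worth checking is that the definition of the alternative treatment—in particular the stipulation that $L(t) = L_{max}$ whenever $N > N_{tol}$—does not violate the two-sided bound $0 \leq L(t) \leq L_{max}$, which it clearly does not, so the argument applies without modification regardless of the tumor's trajectory.
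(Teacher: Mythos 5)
Your proof is correct and matches the paper exactly: the authors likewise dispose of this proposition in one line by invoking Proposition~\ref{prop:key}, with the two choices $\bar{L}=0$ and $\bar{L}=L_{max}$ that you make explicit. Nothing further is needed.
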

Of course, not treating is typically not an option, as the number of sensitive cells would explode, but containment is. One of our main results is that containment minimizes the resistant population among all treatments treating at $L_{max}$ after failing. 
\begin{prop}\label{prop:alt1} (containment minimizes resistance)\\ For all $t \geq 0$, $R_{Cont}(t)  \leq R_{alt}(t)$
\end{prop}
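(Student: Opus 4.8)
The plan is to abandon the time variable and compare the two treatments in the $R-N$ plane, exactly as advertised in the introduction. Since $f_R>0$ on the relevant domain, $R(t)$ is strictly increasing along every trajectory, so it may be used as a new independent variable; write $N_{Cont}(R)$ and $N_{alt}(R)$ for the total size reached at resistant size $R$ under the two treatments, and let $t_{\ast}(R)=\int_{R_0}^{R} dR'/f_R(N_{\ast}(R'),R')$ be the time needed to reach resistant size $R$. The first step is an elementary comparison lemma: if $N_{Cont}(R)\ge N_{alt}(R)$ for every $R$, then, because $f_R$ is non-increasing in $N$, the integrand for containment is pointwise at least as large, so $t_{Cont}(R)\ge t_{alt}(R)$ for all $R$; inverting these increasing functions gives $R_{Cont}(t)\le R_{alt}(t)$ for all $t$, which is exactly the desired conclusion (this is the comparison principle derived in the Appendix). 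Everything therefore reduces to the single inequality $N_{alt}(R)\le N_{Cont}(R)$ for all $R$.

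To obtain it I would first identify the containment curve. In its growth phase containment applies $L=0$, so it follows the no-treatment trajectory until it meets the line $N=N_{tol}$ at some resistant size $R_1$; during stabilization it stays on $N=N_{tol}$; and once it can no longer hold $N_{tol}$ with $L\le L_{max}$, at some $R_{fail}^{Cont}$, it switches to $L_{max}$. Thus, up to $R_{fail}^{Cont}$, $N_{Cont}(R)=\min\{N_{noTreat}(R),N_{tol}\}$. Two sub-facts then trap the alternative below this envelope. First, rewriting Model 4 in the $R$ variable gives $dN/dR=f_N(N,R,L)/f_R(N,R)$, and since $f_N$ is non-increasing in $L$ and $f_R>0$, every admissible dose $L\ge 0$ yields a slope no larger than the no-treatment slope $f_N(N,R,0)/f_R(N,R)$; the scalar ODE comparison theorem (the right-hand side is Lipschitz in $N$ on the compact domain) then gives $N_{alt}(R)\le N_{noTreat}(R)$ for all $R$. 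Second, $N_{alt}(R)\le N_{tol}$ as long as $R\le R_{fail}^{Cont}$: if the alternative tried to cross $N_{tol}$ at such an $R$, then on $N>N_{tol}$ it is forced to use $L_{max}$, while feasibility of containment at that $R$ provides some $L^{\ast}\le L_{max}$ with $f_N(N_{tol},R,L^{\ast})=0$, whence $f_N(N_{tol},R,L_{max})\le 0$ by monotonicity in $L$; the size cannot then push past $N_{tol}$, a contradiction. Combining the two sub-facts yields $N_{alt}(R)\le\min\{N_{noTreat}(R),N_{tol}\}=N_{Cont}(R)$ on $[R_0,R_{fail}^{Cont}]$, and in particular the alternative cannot fail before containment: $R_{fail}^{alt}\ge R_{fail}^{Cont}$.

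It remains to propagate $N_{alt}\le N_{Cont}$ beyond $R_{fail}^{Cont}$, and this is where I expect the real work to lie. On $[R_{fail}^{Cont},R_{fail}^{alt}]$ the alternative is still below $N_{tol}$ while containment, now running at $L_{max}$ with $f_N(N_{tol},R,L_{max})>0$, has risen above $N_{tol}$, so $N_{alt}(R)\le N_{tol}\le N_{Cont}(R)$. Past $R_{fail}^{alt}$ both treatments obey the same equation $dN/dR=f_N(N,R,L_{max})/f_R(N,R)$, and the alternative re-enters this common dynamics at $(R_{fail}^{alt},N_{tol})$ with $N_{tol}\le N_{Cont}(R_{fail}^{alt})$; uniqueness and monotonicity for this scalar ODE then keep $N_{alt}(R)\le N_{Cont}(R)$ for all larger $R$. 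This closes the inequality $N_{alt}\le N_{Cont}$ everywhere, and the comparison lemma of the first step finishes the proof.

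The step I expect to be delicate is the transition at $N=N_{tol}$: making rigorous both that the alternative cannot escape above $N_{tol}$ before containment fails and that the two trajectories merge cleanly into the common $L_{max}$ dynamics afterwards. Both hinge on the assumption that the alternative treats at $L_{max}$ once $N>N_{tol}$, together with the observation that $f_N(N_{tol},R,L_{max})\le 0$ for $R\le R_{fail}^{Cont}$; some care is needed with the boundary case $R=R_{fail}^{Cont}$, where this growth rate vanishes, and with the regularity of the crossing, but the sign structure should make the conclusion robust.
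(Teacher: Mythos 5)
Your proposal follows essentially the same route as the paper: reparametrize by the resistant population size, establish $\tilde{N}_{alt}(r)\leq \tilde{N}_{Cont}(r)=\min\{\tilde{N}_{noTreat}(r),N_{tol}\}$ on the containment phase and beyond, and convert this into $R_{Cont}(t)\leq R_{alt}(t)$ via the monotonicity of $f_R$ in $N$ (your integral comparison of the times $t(R)$ is a valid substitute for the paper's Lemma~\ref{L1L2}). The one step you rightly flag as delicate --- preventing $N_{alt}$ from escaping above $N_{tol}$ --- is closed in the paper not by the pointwise sign of $f_N(N_{tol},r,L_{max})$ (which alone does not forbid a crossing, since the alternative dose is unconstrained at $N=N_{tol}$ and may again drop below $L_{max}$ whenever $N_{alt}$ dips back under $N_{tol}$), but by a contradiction argument: if $\tilde{N}_{alt}(r_2)>\tilde{N}_{Cont}(r_2)$ for some $r_2$, take the largest $r_{max}\leq r_2$ with $\tilde{N}_{alt}(r_{max})\leq N_{tol}$; on $(r_{max},r_2)$ one has $\tilde{L}_{alt}=L_{max}\geq \tilde{L}_{Cont}$, so Lemma~\ref{lem:CompC} started at $r_{max}$ yields $\tilde{N}_{alt}(r_2)\leq \tilde{N}_{Cont}(r_2)$, a contradiction --- and this single argument also absorbs the possible re-crossings that your ``common $L_{max}$ dynamics'' phase glosses over.
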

It follows that $N_{Cont}  \leq N_{alt} + (S_{Cont} - S_{alt}).$ Thus, assuming that the tumor is eventually mostly resistant under the containment treatment, tumor size should eventually be smaller, or at least not substantially larger under the containment treatment than under any alternative one. This suggests that, under our assumptions, among treatments that treat at $L_{max}$ when $N> N_{tol}$, containment should be close to maximizing survival time. Similarly, the fact that the resistant population is larger under MTD that under any alternative treatment suggests that most alternative treatments should eventually lead to a lower tumor size and a longer survival time than MTD. 

More precise statements may be made for idealized versions of containment and MTD: 
ideal containment maximizes time to treatment failure, while ideal MTD minimizes it among 
all treatments eliminating sensitive cells before failing. Moreover, ideal containment eventually 
leads to a lower tumor size and ideal MTD to a larger tumor size than any such alternative treatment. 
\begin{prop}\label{prop:alt2} (comparison with ideal MTD and ideal containment)\\
a) $t_{alt} \leq t_{idCont}$.\\
b) Consider an alternative treatment eliminating sensitive cells before failing, that is, such that $S_{alt}(t_{alt}) = 0$. Then: 

b1) $t_{alt} \geq t_{idMTD}$; 

b2) for all $t \geq 0$, $R_{idCont}(t)  \leq R_{alt}(t) \leq R_{idMTD}(t)$; 

b3) for all $t \geq t_{alt}$, $N_{idCont}(t) \leq N_{alt}(t) \leq N_{idMTD}(t)$.  

In particular, survival time is larger with ideal containment and lower with ideal MTD than with any alternative treatment such that $S_{alt}(t_{alt}) = 0$. 
\end{prop}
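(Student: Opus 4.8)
The plan is to abandon the analysis in time and instead study each treatment through its trajectory in the $R$--$N$ plane. Since $f_R>0$ on the relevant domain, $R(\cdot)$ is strictly increasing, so along any solution we may write $N$ as a function of $R$, say $N=\hat N(r)$, governed by the scalar equation $\tfrac{dN}{dr}=f_N(N,r,L)/f_R(N,r)$, and the time to reach a resistant level $\bar r$ is $T(\bar r)=\int_{R_0}^{\bar r} dr/f_R(\hat N(r),r)$. Two facts drive everything: (i) because $f_N$ is non-increasing in $L$ and $f_R>0$, a \emph{lower} dose makes $dN/dr$ \emph{larger}, so by the comparison principle a lower dose yields a \emph{higher} trajectory $\hat N$ in the $R$--$N$ plane; and (ii) because $f_R$ is non-increasing in $N$, a higher trajectory makes the integrand $1/f_R(\hat N(r),r)$ larger, hence increases $T$. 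Finally, for the idealized treatments failure occurs exactly when $R=N_{tol}$ (sensitive cells are then gone), so their failure time is $T(N_{tol})$.

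First I would dispose of parts (a) and (b1), which are statements about failure times alone. For ideal MTD one has $\hat N_{idMTD}(r)=r$ (no sensitive cells, $N=R$), the lowest possible trajectory since always $N\ge r$; ideal containment follows the untreated ($L=0$) trajectory until it hits $N_{tol}$ and then stays at $N_{tol}$, i.e. $\hat N_{idCont}(r)=\min(\hat N_{noTreat}(r),N_{tol})$ on $[R_0,N_{tol}]$. For an arbitrary alternative treatment, fact (i) gives $\hat N_{alt}\le \hat N_{noTreat}$, and before failure $\hat N_{alt}\le N_{tol}$, so $\hat N_{alt}\le \hat N_{idCont}$ on $[R_0,R_{alt}(t_{alt})]$; since $R_{alt}(t_{alt})\le N_{tol}$, integrand ordering and positivity give $t_{alt}=\int_{R_0}^{R_{alt}(t_{alt})}dr/f_R(\hat N_{alt},r)\le \int_{R_0}^{N_{tol}}dr/f_R(\hat N_{idCont},r)=t_{idCont}$, which is (a). When moreover $S_{alt}(t_{alt})=0$ the failure occurs at $R=N_{tol}$, and $\hat N_{alt}(r)\ge r=\hat N_{idMTD}(r)$ then yields $t_{alt}\ge t_{idMTD}$, which is (b1).

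For (b2) I would convert the trajectory ordering into a time ordering: if $\hat N_1\ge \hat N_2$ pointwise then $T_1\ge T_2$ by (ii), and inverting these increasing functions gives $R_1(t)\le R_2(t)$ for all $t$. It therefore suffices to order the three trajectories for \emph{every} $r\ge R_0$. On $[R_0,N_{tol}]$ I already have $\hat N_{idCont}\ge \hat N_{alt}\ge r=\hat N_{idMTD}$. For $r\ge N_{tol}$ the key is a \emph{merging} observation: both idCont and alt reach failure at the same point $(R,N)=(N_{tol},N_{tol})$ and are treated at $L_{max}$ thereafter, so by uniqueness for the scalar equation they follow one and the same curve $\hat N_{*}$, while $\hat N_{idMTD}(r)=r\le \hat N_{*}(r)$. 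Hence $\hat N_{idCont}\ge \hat N_{alt}\ge \hat N_{idMTD}$ for all $r$, giving $R_{idCont}(t)\le R_{alt}(t)\le R_{idMTD}(t)$.

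Part (b3), and with it the survival conclusion, is where I expect the real difficulty, since one must pass back to $N$ as a function of time on $[t_{alt},\infty)$. The clean mechanism is that once sensitive cells vanish the tumor is purely resistant and stays so under continued maximal treatment, so $N\equiv R$ on the post-failure branch; this is exactly the behaviour built into the idealized treatments (and is automatic for Norton--Simon models, where $\phi_S$ vanishes with $S$). Granting this, for $t\ge t_{idCont}$ all of idCont, alt, idMTD have $N=R$, so (b2) gives $N_{idCont}=R_{idCont}\le R_{alt}=N_{alt}\le R_{idMTD}=N_{idMTD}$; and for $t\in[t_{alt},t_{idCont}]$ one has $N_{idCont}(t)\le N_{tol}\le N_{alt}(t)$, since idCont is still contained while alt has just progressed, with $N_{alt}\le N_{idMTD}$ again from $N=R$ and (b2). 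The survival statement then follows because the orderings of $N(t)$ for $t\ge t_{alt}$ force the threshold $N_{crit}\ge N_{tol}$ to be crossed in the reverse order (any edge case in which ideal MTD reaches $N_{crit}$ before $t_{alt}$ only strengthens the conclusion). The main obstacle throughout is thus not the integral estimates but justifying the $R$--$N$-plane comparison principle rigorously with an exogenous, merely piecewise-regular dose, and controlling the post-failure branch so that the time-ordering of $N$ can be recovered from that of $R$.
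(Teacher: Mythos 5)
Your proof is correct and establishes everything claimed, but it takes a genuinely different route from the paper's in several places, so a comparison is worthwhile. For part a) the paper argues in the time domain: it splits at the time containment starts treating, applies Proposition \ref{prop:alt0} before that time and Lemma \ref{lem:VN} (via $N_{idCont}\geq N_{tol}\geq N_{alt}$) after it, concluding from $R_{idCont}(t_{alt})\leq N_{tol}$; b1) follows from $N_{tol}=R_{alt}(t_{alt})\leq R_{idMTD}(t_{alt})$; and the post-failure half of b2) is obtained by noting that $R_{alt}$ and $R_{idCont}$ then obey the same autonomous equation and invoking invariance under time translation. You instead stay in the $R$--$N$ plane throughout and make the hitting-time functional $T(\bar r)=\int_{R_0}^{\bar r}dr/f_R(\hat N(r),r)$ explicit: a) and b1) become monotonicity of this integral in the trajectory $\hat N$, and the post-failure phase of b2) is absorbed by extending the trajectory ordering to all $r\geq N_{tol}$, where all three curves coincide with $N=r$, before inverting $T$. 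This is arguably more uniform --- one mechanism covers a), b1) and b2) --- and is equivalent in substance to the paper's Lemma \ref{L1L2} combined with Lemma \ref{lem:CompC}; what it costs is that you must justify the integral representation and its inversion for merely piecewise-$C^1$ doses and for idealized treatments with downward jumps in $S$ (the paper outsources this to Appendix \ref{app:cp} and to the explicit argument in Lemma \ref{L1L2} that $R_1(t)$ never reaches $R_2^{\infty}$). Two caveats you share with the paper rather than introduce: the assumption that $S$ remains at $0$ once eliminated fails if $\phi_S(0,R,L)>0$ (back-mutations) and is part of the idealization; and your b3) on $[t_{alt},t_{idCont}]$ is exactly the paper's chain $N_{idCont}(t)\leq N_{tol}=R_{alt}(t_{alt})\leq R_{alt}(t)=N_{alt}(t)$.
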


The next result shows that intermittent containment between $N_{min}$ and $N_{tol} > N_{min}$ leads to outcomes that are intermediate between those of containment at the larger threshold $N_{tol}$ and those of containment at the lower threshold $N_{min}$ (ContNmin). The latter lets tumor size grow until $N = N_{min}$ (or treats at $L_{max}$ until $N=N_{min}$ if $N_0 > N_{min}$), and then stabilizes tumor size at $N_{min}$ as long as possible with a treatment level $L(t) \leq L_{max}$. In the idealized form, ideal containment at $N_{min}$, tumor size is stabilized at $N_{min}$ as long as some sensitive cells remain (and initially instantly reduced to the maximum of $N_{min}$ and $R_0$, if $N_{min} > N_0$). 
\begin{prop} (dose modulation versus treatment vacation)\\
\label{prop:int} 
a) For all $t \geq 0$, $R_{Cont}(t) \leq R_{Int}(t) \leq R_{ContNmin}(t)$, and similarly,  $R_{idCont}(t) \leq R_{idInt}(t) \leq R_{idContNmin}(t)$. \\
b) $t_{idContNmin} \leq t_{idInt} \leq t_{idCont}$\\
c) For all $t \geq t_{idInt}$, $N_{idCont}(t) \leq N_{idInt}(t) \leq N_{idContNmin}(t)$. 
\end{prop}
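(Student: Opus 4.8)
The plan is to reduce every inequality to the comparison principle in the $R$--$N$ plane derived in the Appendix. Since $f_R>0$, the resistant size $R(t)$ is strictly increasing, so each treatment determines a trajectory $R \mapsto N(R)$, and whenever two treatments share the same initial condition and one trajectory lies weakly above the other in the $R$--$N$ plane (i.e. $N_1(R)\ge N_2(R)$ for all relevant $R$), the monotonicity of $f_R$ in $N$ gives $\frac{dR_1}{dt}=f_R(N_1,R_1)\le f_R(N_2,R_2)=\frac{dR_2}{dt}$ at equal $R$, hence $R_1(t)\le R_2(t)$ for all $t$. Thus part (a) becomes a statement about the relative position of three trajectories in the $R$--$N$ plane.

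For the lower bounds in (a), I would observe that Int and idInt are particular alternative treatments: they treat at $L_{max}$ once $N>N_{tol}$, and in the idealized case eliminate all sensitive cells before failing, so $R_{Cont}\le R_{Int}$ and $R_{idCont}\le R_{idInt}$ follow at once from Propositions \ref{prop:alt1} and \ref{prop:alt2} (b2). Geometrically this says the containment trajectory sits at the upper envelope $N=N_{tol}$ while the intermittent trajectory zigzags below it. For the upper bounds $R_{Int}\le R_{ContNmin}$ and $R_{idInt}\le R_{idContNmin}$, I would show directly that $N_{ContNmin}(R)\le N_{Int}(R)$: both trajectories coincide with the untreated curve until $N$ first reaches $N_{min}$, after which ContNmin is pinned at the lower level $N_{min}$ while the intermittent trajectory stays at or above $N_{min}$; the comparison principle then gives the claim. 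I would make the ``pinning'' rigorous by a touching-point argument: at any $R^*$ where the two trajectories meet at a common height $N^*$, I compare the slopes $\frac{dN}{dR}=f_N(N^*,R^*,L)/f_R(N^*,R^*)$ and use that $f_N$ is non-increasing in $L$ together with the fact that at such a meeting the intermittent dose never exceeds the dose used by ContNmin (which is either the stabilizing dose at $N_{min}$ or $L_{max}$). Hence the intermittent trajectory leaves every meeting point from above and never dips below the ContNmin envelope.

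Parts (b) and (c) would then follow from (a) by exploiting that, under every idealized treatment, failure (crossing $N_{tol}$) occurs exactly when $S=0$, that is when $N=R=N_{tol}$. Since the three resistant sizes are increasing and ordered $R_{idCont}\le R_{idInt}\le R_{idContNmin}$ by (a), the treatment with the largest resistant population reaches $R=N_{tol}$ first, giving $t_{idContNmin}\le t_{idInt}\le t_{idCont}$, which is (b). For (c) I would split the interval $t\ge t_{idInt}$: on $[t_{idInt},t_{idCont}]$ ideal containment is still stabilizing, so $N_{idCont}(t)=N_{tol}\le R_{idInt}(t)=N_{idInt}(t)$ because $R_{idInt}(t)\ge R_{idInt}(t_{idInt})=N_{tol}$, while $N_{idInt}(t)=R_{idInt}(t)\le R_{idContNmin}(t)=N_{idContNmin}(t)$ by (a); on $[t_{idCont},\infty)$ all three tumors are purely resistant, so $N_X(t)=R_X(t)$ and the ordering is exactly that of (a).

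The main obstacle I anticipate is the upper bound in (a), namely verifying that the intermittent trajectory never crosses below the ContNmin envelope. The zigzag structure means the two trajectories can touch infinitely often at the troughs $N=N_{min}$, so the slope comparison must be run on each successive phase and shown to persist across the switching times of the intermittent schedule; for the non-idealized treatments one must in addition handle the regime where stabilization at $N_{min}$ becomes infeasible (both trajectories then rising under $L_{max}$) and the transition into the all-resistant phase $N=R$, where the inequality $N_{Int}(R)\ge R = N_{ContNmin}(R)$ is automatic from $S_{Int}\ge 0$. I would therefore organize the argument as a finite sequence of comparisons on the phases determined by the intermittent switching times, invoking the comparison principle on each and checking that $N_{ContNmin}(R)\le N_{Int}(R)$ is preserved at every switch.
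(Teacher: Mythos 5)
Your proposal is correct and follows essentially the same route as the paper: the lower bounds in a) come from Propositions \ref{prop:alt1} and \ref{prop:alt2} by viewing (ideal) intermittent containment as an alternative treatment, the upper bounds come from establishing $\tilde{N}_{ContNmin}(r)\leq \tilde{N}_{Int}(r)$ in the $R$--$N$ plane (the paper does this by a single contradiction argument from the last crossing of $N_{min}$ via Lemma \ref{lem:CompC}, rather than your phase-by-phase touching-point version, but the key dose-ordering observation is the same), and b) and c) follow from a) exactly as you describe, using that idealized treatments fail when $R=N_{tol}$ and that $N=R$ once sensitive cells are gone.
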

This result suggests that, if the lower threshold $N_{min}$ is close to the larger threshold $N_{tol}$, there should be little difference between outcomes of containment and intermittent containment, that is, between a continuous low dose treatment based on dose modulation and an intermittent high dose treatment based on treatment vacation. Of course, this disregards many possible differences between these two approaches. For instance, dose-modulation might lead to a more regular vascularization of the tumor, which might be key for an efficient drug delivery (Enriquez-Navas et al., 2016 \cite{Enriquez-Navas2016}).

We now compare all reference treatments. 

\begin{prop}
\label{prop:ref} (comparison between all reference treatments)\\
a) For all $t \geq 0$:\\
a1) $R_{noTreat}(t) \leq R_{Cont}(t) \leq R_{Int}(t) \leq R_{del-MTD}(t) \leq R_{MTD}(t) \leq R_{idMTD}(t)$\\
and a2) $R_{noTreat}(t) \leq R_{idCont}(t) \leq R_{idInt}(t) \leq R_{del-idMTD}(t) \leq R_{idMTD}(t)$\\
b) $t_{idMTD} \leq t_{del-idMTD} \leq t_{idInt} \leq t_{idCont}$\\
c) For all $t \geq t_{idCont}$, $N_{idCont}(t) \leq N_{idInt}(t) \leq N_{del-idMTD}(t) \leq N_{idMTD}(t)$. 
\end{prop}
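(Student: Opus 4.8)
The plan is to reduce the whole proposition to the two resistant-population chains in part a), from which the failure-time ordering b) and the final-size ordering c) follow almost immediately. To prove a) I would establish each consecutive inequality separately, chaining Proposition \ref{prop:key} together with its corollaries. In chain a1), three of the inequalities are already available: $R_{noTreat} \le R_{Cont}$ and $R_{del-MTD} \le R_{MTD}$ are instances of Proposition \ref{prop:alt0}, since Cont and del-MTD are admissible alternative treatments (both satisfy $0 \le L \le L_{max}$ and treat at $L_{max}$ once $N > N_{tol}$), while $R_{Cont} \le R_{Int}$ is the first inequality of Proposition \ref{prop:int}a). The two remaining inequalities need more care. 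For $R_{Int} \le R_{del-MTD}$ I would use that the two treatments follow identical, untreated trajectories until $N$ first reaches $N_{tol}$, say at time $\tau$; on $[\tau, +\infty)$ one has $L_{Int}(t) \le L_{max} = L_{del-MTD}(t)$, so applying Proposition \ref{prop:key} restarted from the common state at $\tau$ with $\bar L = L_{max}$ gives the inequality there, while the trajectories coincide before $\tau$. For $R_{MTD} \le R_{idMTD}$, Proposition \ref{prop:key} does not apply, because idMTD is not described by a bounded treatment level; instead I would argue directly from the comparison principle for the scalar equation $\dot R = f_R(N,R)$: whenever $R_{MTD}(t) = R_{idMTD}(t) = R^\ast$ one has $N_{MTD} = S_{MTD} + R^\ast \ge R^\ast = N_{idMTD}$, and since $f_R$ is non-increasing in $N$, $\dot R_{MTD} \le \dot R_{idMTD}$, so the ordering propagates from the common value $R_0$.

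Chain a2) is handled by the same two mechanisms. The internal inequality $R_{idCont} \le R_{idInt}$ is again Proposition \ref{prop:int}a). The lower bound $R_{noTreat} \le R_{idCont}$ and the comparisons $R_{idInt} \le R_{del-idMTD}$ and $R_{del-idMTD} \le R_{idMTD}$ I would all derive from the scalar comparison principle: each pair shares a common untreated trajectory up to the time $N = N_{tol}$, and thereafter the more aggressive treatment keeps the sensitive population, hence (for a fixed $R$) the total size $N$, smaller; for the idealized treatments, once all sensitive cells have been removed one simply has $N = R$, the least possible value of $N$ for that $R$. Monotonicity of $f_R$ in $N$ then orders the two resistant populations, and the order again propagates from the shared state at the branch time.

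With part a) in hand, b) and c) are short. Each idealized treatment fails exactly when its resistant population reaches $N_{tol}$, and each $R_{idX}(\cdot)$ is strictly increasing because $f_R > 0$; hence the pointwise chain $R_{idCont} \le R_{idInt} \le R_{del-idMTD} \le R_{idMTD}$ from a2) implies that a larger resistant population reaches $N_{tol}$ no later, giving the reversed ordering $t_{idMTD} \le t_{del-idMTD} \le t_{idInt} \le t_{idCont}$ of part b). For c), observe that for $t \ge t_{idCont}$, the largest of these four times, every idealized treatment has already failed and therefore eliminated its sensitive cells, so $N_{idX}(t) = R_{idX}(t)$ for each; substituting into a2) yields $N_{idCont}(t) \le N_{idInt}(t) \le N_{del-idMTD}(t) \le N_{idMTD}(t)$.

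The hard part will be making the comparisons involving idealized treatments fully rigorous, because Proposition \ref{prop:key} is phrased in terms of a bounded separating level $\bar L$, whereas the idealized treatments act through instantaneous downward jumps in $S$ rather than through any $L \le L_{max}$. The clean resolution is to drop the treatment-level formulation for those steps and reason solely with the scalar equation $\dot R = f_R(N,R)$, using that $f_R$ is non-increasing in $N$ and that a more aggressive treatment yields a smaller total size $N$ for a given resistant size $R$ (minimal, namely $N = R$, once sensitive cells are gone); crucially, $R$ stays continuous even where $S$ and $N$ jump, so the scalar comparison principle still applies. Two secondary points to check are the restart of the comparison at each branch time $\tau$, and the tacit assumption that sensitive cells, once eliminated under an idealized treatment, do not reappear, so that $N = R$ persists after failure.
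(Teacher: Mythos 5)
Your proof is correct and takes essentially the same route as the paper: reduce everything to the resistant-population chains of part a) using the earlier propositions plus a restart of the comparison at the common branch time, then read off b) from the fact that idealized treatments fail exactly when $R=N_{tol}$, and c) from $N=R$ after failure. Your direct scalar-comparison arguments for the idealized links are just unpackaged instances of the paper's Lemma \ref{L1L2} (via $\tilde{N}(r)=r$ being the minimal tumor size for a given resistant size), so the underlying mechanism is identical.
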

Sensitive population sizes may also be compared under two mild additional assumptions: 

(A1) Not treating maximizes the sensitive population. 

(A2) The sensitive population decreases if the tumor is treated at $L_{max}$. 

These assumptions hold for \ref{MG_model}, assuming $L_{max} \geq 1$, and for most models we are aware of. 
They lead to the same comparison for sensitive population sizes as in Viossat and Noble, that is, the opposite as for resistant population sizes.\footnote{For \ref{MG_model}, (A2) holds obviously if  $L_{max} \geq 1$. To see that (A1) holds, 
note that $dN/dt =  \rho \ln(K/N) N - \rho \ln(K/N) L S \geq \rho \ln(K/N) N$ with equality for $L =0$. 
The comparison principle thus implies that not treating maximizes tumor size. Since not treating also minimizes the resistant population size (Proposition \ref{prop:alt0}), it follows that it maximizes the sensitive population size.} 
\begin{prop} 
\label{prop:S} (comparison of sensitive populations)\\
Assume that (A1) and (A2) hold. Then for all $t \geq 0$:\footnote{The only inequality that uses (A1) is $S_{alt} \leq S_{Cont}$.}\\ 
a) $S_{idMTD}(t) \leq S_{MTD}(t) \leq S_{alt}(t) \leq S_{Cont}(t) \leq S_{noTreat}(t)$\\
b) $S_{ContNmin}(t) \leq S_{Int}(t) \leq S_{Cont}(t)$  and $S_{del-MTD}(t) \leq S_{Int}(t)$\\
c)  $S_{idContNmin}(t) \leq S_{idInt}(t) \leq S_{idCont}(t)$\\
d) $S_{idMTD}(t) \leq S_{del-idMTD}(t) \leq S_{idInt}(t) \leq S_{idCont}(t) \leq S_{noTreat}(t)$
\end{prop}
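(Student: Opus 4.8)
The plan is to reduce every comparison of sensitive populations to a comparison of the quantity $N-R$, and to carry out that comparison in the $R$--$N$ plane rather than in time. Since $f_R>0$, the resistant population $R_i(t)$ is strictly increasing under any treatment $i$, so I can reparametrize each trajectory by $R$ and write $\sigma_i(R):=\mathcal{N}_i(R)-R$ for the sensitive population when the resistant population equals $R$, where $\mathcal{N}_i(R)$ is the total size at resistance level $R$. In the $R$--$N$ plane the dynamics read $d\mathcal{N}/dR = f_N(\mathcal{N},R,L)/f_R(\mathcal{N},R)$, and because $f_N$ is non-increasing in $L$ and $f_R>0$, a more aggressive treatment produces a pointwise smaller right-hand side. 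Hence, whenever a constant $\bar L$ separates two treatments ($L_j(t)\le \bar L\le L_i(t)$ for all $t$), an ODE comparison in the variable $R$ gives $\mathcal{N}_i(R)\le \mathcal{N}_j(R)$, that is $\sigma_i(R)\le\sigma_j(R)$ at every fixed resistance level: the more aggressive treatment has fewer sensitive cells for a given amount of resistance. This is the fixed-$R$ form of the comparison principle already underlying Propositions \ref{prop:key}--\ref{prop:ref}.

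The second ingredient converts these fixed-$R$ inequalities into the fixed-time inequalities claimed in the statement. The bridge is the elementary identity $d\sigma/dR=(dS/dt)/(dR/dt)$: under (A2) a treatment held at $L_{max}$ satisfies $dS/dt\le 0$, and during any containment phase $N$ is pinned so that $dS/dt=-dR/dt<0$; in both cases $\sigma$ is non-increasing in $R$. Given a pair with $i$ more aggressive than $j$ and $\sigma_i$ non-increasing, I combine this monotonicity with the fixed-time ordering $R_i(t)\ge R_j(t)$ supplied by Propositions \ref{prop:alt0}, \ref{prop:alt1}, \ref{prop:int} and \ref{prop:ref} through the chain $S_i(t)=\sigma_i(R_i(t))\le\sigma_i(R_j(t))\le\sigma_j(R_j(t))=S_j(t)$, where the first inequality uses monotonicity and the second uses the fixed-$R$ comparison. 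This template handles $S_{idMTD}\le S_{MTD}$ (trivially, since $S_{idMTD}\equiv 0$), $S_{MTD}\le S_{alt}$, and the analogous chains in (b), (c) and (d); the idealized and intermittent variants are even cleaner because $N$ is pinned to $N_{tol}$, to $N_{min}$, or held between them during the active phase, which forces $\sigma$ to be monotone there.

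The one inequality not covered by this machinery is $S_{alt}\le S_{Cont}$, since no constant $\bar L$ separates an arbitrary treatment from containment; this is where (A1) enters. Here I split time at the instant $\tau$ when $N$ first reaches $N_{tol}$. On $[0,\tau)$ the containment treatment coincides with \emph{noTreat}, so (A1) gives $S_{alt}(t)\le S_{noTreat}(t)=S_{Cont}(t)$. During the ensuing containment phase, where $N_{Cont}=N_{tol}$, one has $N_{alt}(t)\le N_{tol}$ as long as the alternative treatment has not yet failed, so that $S_{alt}(t)=N_{alt}(t)-R_{alt}(t)\le N_{tol}-R_{Cont}(t)=S_{Cont}(t)$ using $R_{Cont}(t)\le R_{alt}(t)$ from Proposition \ref{prop:alt1}; the residual regime after failure is absorbed into the case analysis below. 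By contrast, $S_{Cont}\le S_{noTreat}$ (and likewise $S_{idCont}\le S_{noTreat}$) can be obtained \emph{without} (A1) by the same phase decomposition, comparing against the untreated trajectory via $N_{noTreat}\ge N_{tol}$ together with $R_{noTreat}\le R_{Cont}$, consistently with the footnoted claim that (A1) is needed only for $S_{alt}\le S_{Cont}$.

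I expect the main obstacle to be the bookkeeping of treatment phases rather than any single estimate. For each treatment one must track the growth phase, the containment or intermittent phase, and the terminal $L_{max}$ phase, verify across phase boundaries that the relevant $\sigma$ is indeed monotone and that the two trajectories remain comparable, and check that the fixed-time resistance orderings from the earlier propositions hold on the union of these phases. The delicate points are the regimes where a treatment's containment phase has ended, so that $\sigma$ is no longer obviously monotone, and where the two treatments being compared fail at different times; these force the argument onto the correct sub-interval, but in each case they reduce to one of the two comparisons established above.
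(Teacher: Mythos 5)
Your overall machinery is the paper's own: reparametrize by $R$, compare $\tilde{N}$ (your $\mathcal{N}$) at fixed resistance level via the comparison principle (Lemma \ref{lem:CompC}), and convert to fixed-time inequalities through the chain $S_i(t)=\tilde{S}_i(R_i(t))\le\tilde{S}_i(R_j(t))\le\tilde{S}_j(R_j(t))=S_j(t)$ using monotonicity of $\tilde S$ (the second half of Lemma \ref{L1L2}) together with Lemma \ref{lem:S}. That part is sound and handles $S_{MTD}\le S_{alt}$, $S_{Cont}\le S_{noTreat}$, and parts b)--d) essentially as the paper does.

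The gap is in $S_{alt}\le S_{Cont}$ (and the identical argument needed for $S_{Int}\le S_{Cont}$). Your phase decomposition covers $t$ before containment reaches $N_{tol}$ (via (A1)) and the regime where containment is stabilizing and $N_{alt}(t)\le N_{tol}$ (via $S_{alt}=N_{alt}-R_{alt}\le N_{tol}-R_{Cont}=S_{Cont}$). The remaining regimes are dismissed with the claim that they ``reduce to one of the two comparisons established above,'' but one of them does not. Your template $S_{alt}(t)=\tilde{S}_{alt}(R_{alt}(t))\le\tilde{S}_{Cont}(R_{alt}(t))\le\tilde{S}_{Cont}(R_{Cont}(t))=S_{Cont}(t)$ requires evaluating $\tilde{S}_{Cont}$ at $R_{alt}(t)$, which is only defined for $R_{alt}(t)<R_{Cont}^{\infty}$. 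Since $R_{alt}\ge R_{Cont}$ and both populations are bounded, the tail regime $R_{alt}(t)\ge R_{Cont}^{\infty}$ genuinely occurs, and there neither the fixed-$R$ comparison nor the direct bound $N_{alt}\le N_{tol}$ is available (the alternative treatment may well have failed by then). The paper needs a separate argument here: $S_{Cont}(t)\ge S_{Cont}^{\infty}$ because $S_{Cont}$ is eventually non-increasing, and then $S_{alt}(t)\le S_{Cont}^{\infty}$ is proved by a two-case analysis according to whether $\tilde{N}_{alt}(R_{Cont}^{\infty})\ge N_{tol}$ (use Lemma \ref{lem:S} and pass to the limit $r\to R_{Cont}^{\infty}$ in $\tilde{S}_{alt}(r)\le\tilde{S}_{Cont}(r)$) or $\tilde{N}_{alt}(R_{Cont}^{\infty})<N_{tol}$ (bound $S_{alt}\le N_{tol}-R_{alt}\le N_{tol}-R_{Cont}^{\infty}\le S_{Cont}^{\infty}$ until $N_{alt}$ reaches $N_{tol}$, then invoke Lemma \ref{lem:S}). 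This limit-and-cases step is a real piece of the proof, not bookkeeping, and your proposal does not contain it.
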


\section{Proofs}
\label{sec:proofs}
Viossat and Noble's proofs build on their Proposition 1, which gives conditions allowing them to compare the resistant populations or the sensitive populations under two different treatments. The following part of this result is still true in our framework, with the same proof: 
\begin{lemma}\label{lem:VN}
Let $0 \leq t_0 \leq t_1$. Consider two solutions $(S_1, R_1)$ and $(S_2, R_2)$ of \ref{VA_model}, associated to treatment functions $L_1$ and $L_2$, respectively. Assume that: i) $R_1(t_0) \leq R_2(t_0)$, and ii) $S_1(t_0) \geq S_2(t_0)$. If: iiia) $S_1(t) \geq S_2(t)$ on $[t_0, t_1]$, or: iiib) $N_1(t) \geq N_2(t)$ on $[t_0, t_1]$, then $R_1(t) \leq R_2(t)$ and $S_1(t) \geq S_2(t)$ on $[t_0, t_1]$.\end{lemma}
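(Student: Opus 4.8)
The plan is to reduce both cases to a single scalar differential inequality for the gap $D(t) := R_2(t) - R_1(t)$ between the resistant populations, and then to close it with a Grönwall-type (integrating-factor) comparison. The reason this works without any bootstrapping is that hypotheses iiia) and iiib) are precisely tailored so that the ordering of the sensitive (resp.\ total) populations is available on all of $[t_0,t_1]$, and can therefore be fed directly into the monotonicity of $\phi_R$; nothing needs to be propagated forward in time. Note also that the conclusion $S_1 \geq S_2$ is already part of hypothesis iiia), so under iiia) it suffices to prove $R_1 \leq R_2$.

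For case iiia), writing $\dot R_i = \phi_R(S_i, R_i)$ I would decompose
\[
\dot D = \phi_R(S_2,R_2) - \phi_R(S_1,R_1) = \bigl[\phi_R(S_2,R_2)-\phi_R(S_2,R_1)\bigr] + \bigl[\phi_R(S_2,R_1)-\phi_R(S_1,R_1)\bigr].
\]
Since $\phi_R$ is $C^1$ and the trajectories remain in a fixed compact set (solutions are assumed bounded), the first bracket equals $c(t)\,D(t)$ with $c(t) := \int_0^1 \partial_R\phi_R\bigl(S_2, R_1 + s(R_2-R_1)\bigr)\,ds$ a bounded, continuous coefficient; the second bracket is $\geq 0$ because $S_1 \geq S_2$ on $[t_0,t_1]$ and $\phi_R$ is non-increasing in $S$. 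Hence $\dot D \geq c(t)\,D$ with $D(t_0) = R_2(t_0)-R_1(t_0) \geq 0$, and the integrating factor $\exp\bigl(-\int_{t_0}^t c\bigr)$ shows $D$ stays non-negative, i.e. $R_1 \leq R_2$ throughout.

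For case iiib) I would work instead with the equivalent form \ref{VA_modelbis}, $\dot R_i = f_R(N_i, R_i)$, exploiting that $f_R$ is non-increasing in $N$ (equivalently $\partial_N f_R = \partial_S \phi_R \leq 0$). The identical decomposition, now peeling off $f_R(N_2,R_1) - f_R(N_1,R_1) \geq 0$ via $N_1 \geq N_2$ and monotonicity in $N$, again yields $\dot D \geq c(t)\,D$ and hence $R_1 \leq R_2$. The ordering of the sensitive populations then comes for free from $S = N - R$: one has $S_1 - S_2 = (N_1 - N_2) + (R_2 - R_1) \geq 0$, both summands being non-negative (the first by hypothesis, the second just established).

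The main thing to be careful about is not conceptual but bookkeeping. First, the coefficient $c(t)$ must be genuinely bounded and measurable; the integral representation above, combined with the $C^1$ regularity of $\phi_R$/$f_R$ and boundedness of solutions, delivers this cleanly and avoids any appeal to a pointwise mean value theorem. Second, and more importantly, the sign of the second bracket relies on the monotonicity assumptions, which are only postulated on the domain $R_0 \leq R \leq N \leq N_{crit}$; I would restrict attention to the portion of $[t_0,t_1]$ on which the trajectories lie in this domain (where $R \geq R_0$ holds automatically since $\phi_R > 0$), which is all that is biologically meaningful, and invoke the extension of the growth functions otherwise. Since no continuation or fixed-point argument is needed, this is exactly the part of Viossat and Noble's Proposition~1 that transfers with no change.
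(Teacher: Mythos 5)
Your proof is correct and follows essentially the same route as the paper, which simply invokes Viossat and Noble's original argument: under iiia) (resp.\ iiib)) the monotonicity of $\phi_R$ in $S$ (resp.\ of $f_R$ in $N$) makes $R_1$ a subsolution of the scalar equation satisfied by $R_2$, and your integrating-factor bound is just an explicit rendering of the comparison principle the paper records in its appendix. Your two structural observations --- that the hypotheses supply the ordering of $S$ or $N$ on all of $[t_0,t_1]$ so no forward propagation is needed, and that under iiib) the conclusion $S_1\geq S_2$ falls out of $S=N-R$ once $R_1\leq R_2$ is known --- are exactly the points that make the lemma survive in the weaker framework.
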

What is no longer true is that the same conclusions hold if iiia) or iiib) is replaced by iiic): $L_1(t) \leq L_2(t)$ for all $t$. For instance, in \ref{MG_model}, if $L_1(t_0) = L_2(t_0) >1$, $R_1(t_0) < R_2(t_0)$ and $S_1(t_0) = S_2(t_0)$, then $S_2$ becomes immediately larger than $S_1$. This will slow down the growth of the resistant population under treatment 2. 
Thus, conceivably, $R_2$ could later on become smaller than $R_1$. 

We thus use a new proof technique. Instead of studying directly the evolution of the resistant population $R$, 
the sensitive population $S$, or the total tumor size $N$ as a function of time, we first study, and compare across treatments, 
the evolution of tumor size $N$ \emph{as a function of the number of resistant cells}. In other words, we compare trajectories 
in the $R-N$ plane, that is, the sets of points $(R(t), N(t))$ for all $t\geq 0$. 

To be more formal, fix a treatment $L$, and let $R^{\infty} = \lim_{t \to +\infty} R(t)$. 
Since the resistant population increases continuously, for any $r \in [R_0, R^{\infty})$, there exists a unique time $t(r)$ at which the 
resistant population has size $r$, that it, $R(t(r)) = r$. Denote by $\tilde{S}(r)$, $\tilde{N}(r) = \tilde{S}(r) + r$, and $\tilde{L}(r)$, the number of sensitive cells, the total number of tumor cells, and the treatment level at time $t(r)$, that is, when the resistant population reaches size $r$. All these functions may be shown to be piecewise continuously differentiable, and $\tilde{S}$ and $\tilde{N}$ are also continuous. The graph of function $\tilde{N}$ coincides with the trajectory of the solution in the $R-N$ plane. It may be analyzed by noting that function $\tilde{N}$ satisfies the differential equation: 
\[\frac{d \tilde{N}}{d r}  = G(\tilde{N}, r) \mbox{ where } 
G(\tilde{N}, r) = \frac{f_N(\tilde{N}, r, \tilde{L}(r))}{f_R(\tilde{N},r)}\]
Trajectories in the $R-N$ plane, and their connections to the evolution of tumor size and of the resistant population as a function of time 
are illustrated in Figure \ref{fig:3}.

\subsection{Key lemmata}

Our first result shows that if, for any resistant population level $r$, tumor size is larger under treatment 1 than under treatment 2, then at any time $t$, the resistant population is smaller under treatment 1 than under treatment 2. The intuition is the following: at the time $t_i(r)$ when the resistant population reaches size $r$ under treatment $i$, the speed at which the resistant population increases is given by: 
\[\frac{dR_i}{dt}(t_i(r)) = f_R(N_i(t_i(r)), R_i(t_i(r)) = f_R(\tilde{N}_i(r), r)\] 
Since $f_R$ is 
non-increasing in $N$, it follows that if $N_1(r) \geq N_2(r)$, the resistant population will increase quicker from $r$ to $r+dr$ under treatment 2 
than under treatment 1 ($dr$ is a small positive increment). If this holds for all resistant population sizes $r$, then $R_2$ will remain 
no-smaller than $R_1$ at all times $t \geq 0$. 

\begin{lemma}\label{L1L2}
Let $L_1(t)$ and $L_2(t)$ be two different treatments. Consider solutions $(N_1, R_1)$ and $(N_2, R_2)$ of \ref{VA_modelbis} 
associated to these treatments such that $R_1(0) = R_2(0) = R_0$. If  $\tilde{N}_1(r)\geq  \tilde{N}_2(r)$ for all $r$ in 
$[R_0,\min\{R^{\infty}_1,R^{\infty}_2\})$, then $R_1^{\infty} \leq R_2^{\infty}$ 
and $R_1(t)\leq  R_{2}(t)$ for all $t\geq 0$. 

Moreover, if on an interval $[r_1,r_2]$,  $\tilde{S}_1(r)$ is non-increasing or $\tilde{S}_2(r)$ is non-increasing, then $S_1(t) \geq  S_{2}(t)$ 
for all $t$ in $[t_1(r_1), t_2(r_2)]$.
\end{lemma}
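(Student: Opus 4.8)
The plan is to work with the change of variables $t \mapsto R_i(t)$ rather than with time directly. Since $f_R > 0$ on the relevant domain, each $R_i$ is strictly increasing and $C^1$ in $t$, so its inverse $t_i : [R_0, R_i^\infty) \to [0,\infty)$ is well-defined and $C^1$, with $t_i'(r) = 1/f_R(\tilde{N}_i(r), r)$; integrating from $R_0$ (where $t_i(R_0) = 0$) yields
\[ t_i(r) = \int_{R_0}^{r} \frac{ds}{f_R(\tilde{N}_i(s), s)} . \]
The whole first statement then reduces to comparing these two integrals.

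First I would show $t_1(r) \ge t_2(r)$ on $[R_0, \min\{R_1^\infty, R_2^\infty\})$: since $f_R$ is non-increasing in $N$ and $\tilde{N}_1(s) \ge \tilde{N}_2(s)$ by hypothesis, $f_R(\tilde{N}_1(s),s) \le f_R(\tilde{N}_2(s),s)$, so the integrand for $i=1$ dominates that for $i=2$ and the inequality survives integration. To get $R_1^\infty \le R_2^\infty$ I argue by contradiction: if $R_2^\infty < R_1^\infty$, then $t_2(r) \to \infty$ as $r \to R_2^\infty$ (the limit is attained only as $t \to \infty$), while $t_1(r) \to t_1(R_2^\infty) < \infty$ because $R_2^\infty$ is interior to $[R_0, R_1^\infty)$, contradicting $t_1 \ge t_2$. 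Hence $\min\{R_1^\infty, R_2^\infty\} = R_1^\infty$ and $t_1 \ge t_2$ holds on all of $[R_0, R_1^\infty)$. Finally, for fixed $t$, put $r = R_1(t) < R_1^\infty \le R_2^\infty$; then $t_2(r) \le t_1(r) = t$, so $R_2(t) \ge R_2(t_2(r)) = r = R_1(t)$, which is $R_1(t) \le R_2(t)$.

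For the \emph{moreover} part, the preliminary step is to locate the two resistant sizes. For $t \in [t_1(r_1), t_2(r_2)]$, monotonicity of the $R_i$ at the endpoints together with the first part gives $r_1 = R_1(t_1(r_1)) \le R_1(t) \le R_2(t) \le R_2(t_2(r_2)) = r_2$, so $r := R_1(t)$ and $r' := R_2(t)$ both lie in $[r_1, r_2]$ with $r \le r'$. Writing $S_i(t) = \tilde{S}_i(R_i(t)) = \tilde{N}_i(R_i(t)) - R_i(t)$, the result follows by chaining inequalities. If $\tilde{S}_2$ is non-increasing on $[r_1,r_2]$, then
\[ S_1(t) = \tilde{N}_1(r) - r \ge \tilde{N}_2(r) - r = \tilde{S}_2(r) \ge \tilde{S}_2(r') = S_2(t), \]
using $\tilde{N}_1 \ge \tilde{N}_2$ at $r$ and then $r \le r'$. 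If instead $\tilde{S}_1$ is non-increasing, the symmetric chain $S_2(t) = \tilde{N}_2(r') - r' \le \tilde{N}_1(r') - r' = \tilde{S}_1(r') \le \tilde{S}_1(r) = S_1(t)$ works, this time invoking the hypothesis at $r' < R_1^\infty$.

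I expect the main obstacle to be bookkeeping rather than computation: establishing $R_1^\infty \le R_2^\infty$ so that $t_2(r)$ is defined wherever it is needed, and verifying that both $R_1(t)$ and $R_2(t)$ remain in $[r_1, r_2]$ on $[t_1(r_1), t_2(r_2)]$ so that the assumed monotonicity of the relevant $\tilde{S}_i$ can legitimately be applied at $r$ and $r'$. The regularity facts used along the way --- that $R_i \in C^1$ (hence $t_i \in C^1$ with the stated derivative) and that the integral representation remains valid across the finitely many break-points of the treatment --- are routine consequences of the standing assumptions, so I would record them briefly rather than dwell on them.
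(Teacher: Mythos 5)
Your proof is correct, and while it follows the paper's overall strategy (everything is read off from the trajectories $\tilde{N}_i$ in the $R$--$N$ plane, using that $f_R$ is positive and non-increasing in $N$), the mechanism for the first assertion is genuinely different. The paper keeps time as the independent variable: it bounds $dR_1/dt = f_R(\tilde{N}_1(R_1),R_1) \leq F(R_1)$ with $F(\rho) := f_R(\tilde{N}_2(\rho),\rho)$, notes $dR_2/dt = F(R_2)$, and invokes the comparison principle for the autonomous scalar ODE $\dot{R}=F(R)$ --- which forces it to prove a dedicated variant (Proposition~\ref{prop:cp}, item b) handling the fact that $F$ is only strongly piecewise $C^1$, plus a separate continuity argument to rule out $R_1(t)$ reaching $R_2^{\infty}$. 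You instead invert $R_i$ and compare the hitting times $t_i(r)=\int_{R_0}^{r} ds/f_R(\tilde{N}_i(s),s)$ pointwise in the integrand; this is more elementary (no comparison principle needed, since $R_i$ is $C^1$ with positive derivative and the FTC does all the work), and your divergence argument for $t_2(r)$ as $r\uparrow R_2^{\infty}$ replaces the paper's first-hitting-time contradiction for establishing $R_1^{\infty}\leq R_2^{\infty}$. The price is only the need to justify the integral representation across the finitely many switch times of $L$, which, as you note, is immediate here because $f_R$ does not depend on $L$, so $R_i$ is genuinely $C^1$. The \emph{moreover} part of your argument --- locating $R_1(t)\leq R_2(t)$ in $[r_1,r_2]$ and chaining $\tilde{N}_1\geq\tilde{N}_2$ with the assumed monotonicity of $\tilde{S}_1$ or $\tilde{S}_2$ --- is identical to the paper's, including the implicit observation that monotonicity of $\tilde{S}_i$ on $[r_1,r_2]$ presupposes $r_2<R_i^{\infty}$ so that all evaluations are legitimate.
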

\begin{proof}
Consider a time $t\geq 0$ such that $R_1(t) < R_2^{\infty}$, so that $\tilde{N}_2(R_1(t))$ is well defined. Since $N(t)= \tilde{N}(R(t))$, $\tilde{N}_1 \geq \tilde{N}_2$ and $f_N$ is non-increasing in $N$, we obtain: 
\begin{align*}
    \frac{dR_1}{dt}(t) = f_R(N_1(t), R_1(t)) = f_R(\tilde{N}_1(R_1(t)), R_1(t)) \leq& f_R(\tilde{N}_2(R_1(t)), R_1(t))\\
    =:&F(R_1(t))
\end{align*}
while 
\[\frac{dR_2}{dt}(t) = f_R(\tilde{N}_2(R_2(t)), R_2(t))=F(R_2(t)).\]
Since $R_1(0) = R_2(0)$, the comparison principle (Proposition \ref{prop:cp}, item b), in Appendix \ref{app:cp}) implies that for all times $t \geq 0$ 
such that $R_1(t) < R_2^{\infty}$, we have $R_1(t) \leq R_2(t)$ (See Figure \ref{fig:4}).

\begin{figure}[htbp]
    \centering
    \includegraphics[scale=0.25]{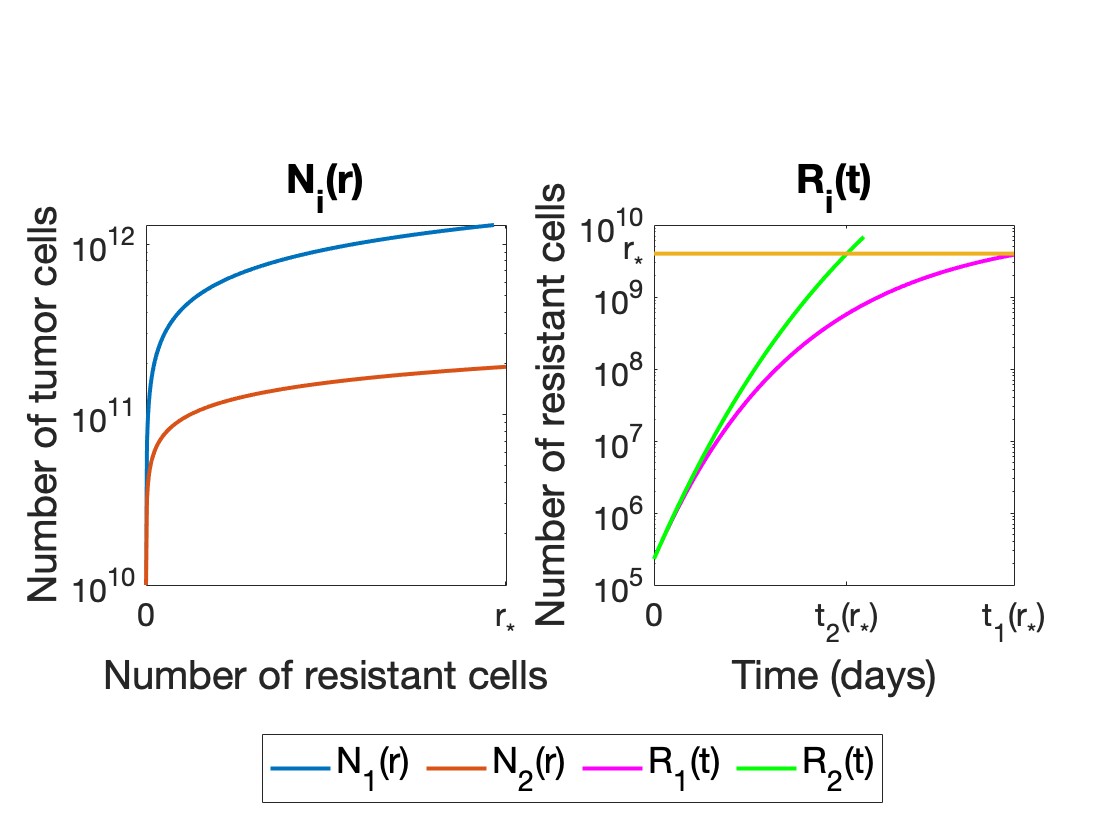}
    \caption{Using two different constant treatment levels on the Gompertzian model as an example, this figure illustrates that the relation $\tilde{N}_1\geq  \tilde{N}_2$ over the interval $[R_0,r_*)$, $r_*=4\times 10^9$ (Left panel), translates into $R_1(t)\leq  R_2(t)$ over the interval $[0,\min\{t_1(r_*),t_2(r_*)\}]$ (Right panel). The same behavior will be observed for an arbitrary choice of $r_*$.  }
     \label{fig:4}
\end{figure}

We now show that the inequality $R_1(t) < R_2^{\infty}$, hence the conclusion 
$R_1(t) \leq R_2(t)$, holds at all times $t \geq 0$. Indeed, otherwise there is a first time $t^* \geq 0$ such that $R_1(t^*) = R_2^{\infty}$, and 
$t^*>0$. Since $R_i$ is increasing, it follows that on $[0, t^*)$, $R_1(t) \leq R_2(t) \leq R_2(t^*) < R_2^{\infty}$. By continuity of $R_1$, this implies 
that  $R_1(t^*) \leq R_2(t^*) < R_2^{\infty}$, a contradiction.

We now prove the result on sensitive cells. Assume that on $[r_1, r_2]$, $\tilde{S}_1(r)$ is non-increasing (which implicitly requires $r_2 < R_1^{\infty}$, so that $\tilde{S}_1(r)$ is well-defined on $[r_1, r_2]$). For all $t$ in $[t_1(r_1), t_2(r_2)]$, $r_1 \leq R_1(t) \leq R_2(t) \leq r_2$. Moreover, the assumption $\tilde{N}_1(r) \geq \tilde{N}_2(r)$ is equivalent to $\tilde{S}_1(r) \geq \tilde{S}_2(r)$. Thus we obtain: 
\[S_1(t) = \tilde{S}_1(R_1(t)) \geq \tilde{S}_1(R_2(t)) \geq \tilde{S}_2(R_2(t)) = S_2(t).\] 
The first inequality follows from the fact that $\tilde{S}_1$ is non-increasing on $[r_1, r_2]$, the second from the fact that $\tilde{S}_1 \geq \tilde{S}_2$. 
If it is $\tilde{S}_2$ which is non-increasing (which implicitly requires $r_2 < R_2^{\infty}$, so that $\tilde{S}_2$ is well-defined on $[r_1, r_2]$), then: 
\[S_1(t) = \tilde{S}_1(R_1(t)) \geq \tilde{S}_2(R_1(t)) \geq \tilde{S}_2(R_2(t)) = S_2(t).\]  
The first inequality follows from the fact that $\tilde{S}_1 \geq \tilde{S}_2$, the second from the fact that $\tilde{S}_2$ is non-increasing on $[r_1, r_2]$.
\end{proof}

Assume now that for any resistant population level $r$, the treatment level when the resistant population reaches size $r$ is lower for treatment 1 than for treatment 2. Our second result shows that the tumor size when the resistant population reaches size $r$ is then always larger for treatment 1 than for treatment 2. By the previous lemma, this implies that the resistant population is always smaller under treatment 1 than under treatment 2. 
 \begin{lemma}
 \label{lem:CompC}
Let $L_1(t)$ and $L_2(t)$ be two different treatments such that
\[
L_1(t)\leq  \overline{L}\leq  L_2(t),
\]
for a certain positive number $\overline{L}$, or more generally such that $\tilde{L}_1(r) \leq \tilde{L}_2(r)$ for all $r$ in $[R_0, R^*)$ where $R^* = \min\{R_1^{\infty},R_2^{\infty}\}$. 
Consider solutions of \ref{VA_modelbis} associated to these treatments such that $R_1(0)=R_2(0)$ and $N_1(0) \geq N_2(0)$. Then $\tilde{N}_1(r)\geq  \tilde{N}_2(r)$ for all $r$ in $[R_0, R^*)$. Therefore by Lemma \ref{L1L2}, $R_1^{\infty} \leq R_2^{\infty}$ and $R_1(t) \leq R_2(t)$ for all $t \geq 0$. 
\end{lemma}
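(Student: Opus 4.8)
The plan is to prove the inequality $\tilde{N}_1(r) \geq \tilde{N}_2(r)$ entirely in the $R$-$N$ plane, where both trajectories are governed by the scalar ODE $\frac{d\tilde{N}_i}{dr} = G_i(\tilde{N}_i, r)$ with $G_i(\tilde{N}, r) = f_N(\tilde{N}, r, \tilde{L}_i(r))/f_R(\tilde{N}, r)$, and then feed the result into Lemma \ref{L1L2}. The key observation is that these two functions solve ODEs that differ only through the treatment level appearing in $f_N$, so the hypothesis $\tilde{L}_1(r) \leq \tilde{L}_2(r)$ should make $\tilde{N}_1$ a supersolution of the equation solved by $\tilde{N}_2$. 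I would therefore single out the vector field of treatment $2$, namely $G_2(\tilde{N}, r) = f_N(\tilde{N}, r, \tilde{L}_2(r))/f_R(\tilde{N}, r)$, as the reference field: by construction $\tilde{N}_2$ is an exact solution of $\frac{dy}{dr} = G_2(y, r)$ on $[R_0, R^*)$.

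The second step is the differential inequality. For every $r \in [R_0, R^*)$ I would compare the two right-hand sides at the value $\tilde{N}_1(r)$. Since $f_R$ is positive on the relevant domain and $f_N$ is non-increasing in $L$, the hypothesis $\tilde{L}_1(r) \leq \tilde{L}_2(r)$ gives $f_N(\tilde{N}_1(r), r, \tilde{L}_1(r)) \geq f_N(\tilde{N}_1(r), r, \tilde{L}_2(r))$, hence
\[
\frac{d\tilde{N}_1}{dr}(r) = \frac{f_N(\tilde{N}_1(r), r, \tilde{L}_1(r))}{f_R(\tilde{N}_1(r), r)} \geq \frac{f_N(\tilde{N}_1(r), r, \tilde{L}_2(r))}{f_R(\tilde{N}_1(r), r)} = G_2(\tilde{N}_1(r), r).
\]
Thus $\tilde{N}_1$ is a supersolution of $\frac{dy}{dr} = G_2(y, r)$, while $\tilde{N}_2$ is the solution, and the initial data satisfy $\tilde{N}_1(R_0) = N_1(0) \geq N_2(0) = \tilde{N}_2(R_0)$. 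Note that $G_2(\tilde{N}, r)$ is well defined for the argument $\tilde{N} = \tilde{N}_1(r)$ even though this need not lie on trajectory $2$, because $\tilde{L}_2(\cdot)$ is a function of $r$ alone and $f_R$ stays positive on the domain $R_0 \leq r \leq \tilde{N} \leq N_{crit}$.

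The third step is to invoke the comparison principle of Appendix \ref{app:cp} (Proposition \ref{prop:cp}) to conclude that $\tilde{N}_1(r) \geq \tilde{N}_2(r)$ on all of $[R_0, R^*)$, and then to apply Lemma \ref{L1L2}, which converts this ordering of the $R$-$N$ trajectories into $R_1^{\infty} \leq R_2^{\infty}$ and $R_1(t) \leq R_2(t)$ for all $t \geq 0$. The special case $L_1(t) \leq \overline{L} \leq L_2(t)$ is subsumed, since it implies $\tilde{L}_1(r) \leq \overline{L} \leq \tilde{L}_2(r)$ for every $r$. I expect the main obstacle to be regularity rather than the monotonicity argument itself: the functions $\tilde{L}_i(r)$ are only (strongly) piecewise continuously differentiable, so the reference field $G_2(\tilde{N}, r)$ is merely piecewise continuous in $r$, and $\tilde{N}_i$ satisfies its ODE only off a finite set of breakpoints. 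This is precisely why a direct appeal to the classical comparison theorem is insufficient and the tailored Appendix principle is needed; I would make sure the supersolution inequality is interpreted in the a.e.\ (or one-sided, at breakpoints) sense matching the hypotheses of Proposition \ref{prop:cp}, and check that the common domain of definition is exactly $[R_0, R^*)$ with $R^* = \min\{R_1^{\infty}, R_2^{\infty}\}$ so that both $\tilde{N}_1$ and $\tilde{N}_2$ are available throughout the comparison.
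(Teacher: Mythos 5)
Your proof is correct and follows essentially the same route as the paper's: you derive the same differential inequality $\frac{d\tilde{N}_1}{dr} \geq G_2(\tilde{N}_1, r)$ from the monotonicity of $f_N$ in $L$ and the positivity of $f_R$, invoke the comparison principle of Proposition \ref{prop:cp} (the paper uses item a) specifically, which handles the piecewise-$C^1$ dependence on $r$ through $\tilde{L}_2$ that you correctly flag as the regularity issue), and then apply Lemma \ref{L1L2}. No gaps.
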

\begin{proof}
Since $f_N$ is non-increasing in $L$, and for all $r$ in $[R_0,R^*)$, $\tilde{L}_1(r)\leq  \tilde{L}_2(r)$, we get: 
\[
\frac{d\tilde{N}_1}{dr} =\frac{f_N(\tilde{N}_1, r, \tilde{L}_1(r))}{f_R(\tilde{N}_1, r)}
 \geq  \frac{f_N(\tilde{N}_1, r, \tilde{L}_2(r))}{f_R(\tilde{N}_1, r)} =: G_2(\tilde{N}_1,r)
\]
while $\displaystyle \frac{d\tilde{N}_2}{dr} =G_2(\tilde{N}_2, r)$. Moreover, $\tilde{N}_{2}(R_0) = N_2(0) \leq  N_1(0) = \tilde{N}_1(R_0)$. Therefore, by the comparison principle (Proposition \ref{prop:cp}, item a), in Appendix \ref{app:cp}), $\tilde{N}_1(r)\geq  \tilde{N}_2(r)$ for all $r$ in $[R_0,R^*)$. Then apply Lemma \ref{L1L2}.
\end{proof}

\subsection{Proof of propositions \ref{prop:key} to \ref{prop:S}}

Proposition \ref{prop:key} follows from Lemma \ref{lem:CompC}, and Propositions \ref{prop:constantdelay} and \ref{prop:alt0} from Proposition \ref{prop:key}. \\ 

\noindent \emph{Proof of Proposition \ref{prop:alt1}.} For later purposes, let us prove a more general result: For all $t \geq 0$, \begin{equation}
    \label{eq:bonus}
R_{noTreat}(t) \leq R_{Cont}(t)  \leq R_{alt}(t) \leq R_{MTD}(t) \leq R_{idMTD}(t).
\end{equation}
This follows from Lemma \ref{L1L2} and the fact that, whenever these comparisons make sense: 
\begin{equation}
\label{eq:tildeN} 
\tilde{N}_{idMTD}(r) \leq \tilde{N}_{MTD}(r) \leq \tilde{N}_{alt}(r) \leq \tilde{N}_{Cont}(r) \leq \tilde{N}_{noTreat}(r)
\end{equation}
To prove \eqref{eq:tildeN}, note that for any alternative treatment, 
$\tilde{N}_{idMTD}(r) = r \leq \tilde{N}_{alt}(r)$, in particular, 
$\tilde{N}_{idMTD}(r) \leq \tilde{N}_{MTD}(r)$, 
and by Lemma \ref{lem:CompC} with $\bar{L} = 0$, $\tilde{N}_{alt}(r) \leq \tilde{N}_{noTreat}(r)$, 
in particular $\tilde{N}_{Cont}(r) \leq \tilde{N}_{noTreat}(r)$. Moreover, under the constraint $L_{alt}(t) \leq L_{max}$, 
it follows from Lemma \ref{lem:CompC} with $\bar{L} = L_{max}$ that $\tilde{N}_{MTD}(r) \leq \tilde{N}_{alt}(r)$. 

It remains to prove that $\tilde{N}_{alt}(r) \leq \tilde{N}_{Cont}(r)$ for all $r \in [R_0, R^*)$, where 
$R^* = \min \{R_{alt}^{\infty}, R_{Cont}^{\infty}\}$. The notation we introduce is illustrated 
in Fig.~\ref{fig:5}. 
Let $r_1 = \min\{r\geq R_0, \tilde{N}_{Cont}(r) = N_{tol}\}$. 
When $r \leq r_1$,  $\tilde{N}_{Cont}(r) = \tilde{N}_{noTreat}(r) \geq \tilde{N}_{alt}(r)$ as explained above. 
Moreover, for all $r \geq r_1$, $\tilde{N}_{Cont}(r) \geq N_{tol}$. Thus, assuming by contradiction that 
there exists $r_2 \geq r_1$ such that $\tilde{N}_{Cont}(r_2) < \tilde{N}_{alt}(r_2)$, it follows that $\tilde{N}_{alt}(r_2) > N_{tol}$. 
Let 
\[r_{max} = \max\{r \leq r_2, \tilde{N}_{alt}(r) \leq  N_{tol}\}.\]
Note that since $\tilde{N}_{alt}(r_1) < \tilde{N}_{Cont}(r_1) = N_{tol}$, we must have $r_{max} \geq r_1$. 
Therefore, $\tilde{N}_{alt}(r_{max}) =N_{tol} \leq \tilde{N}_{Cont}(r_{max})$. Moreover,  
on $(r_{max},r_2)$, $\tilde{N}_{alt}(r) > N_{tol}$, 
hence $\tilde{L}_{alt}(r) = L_{max} \geq \tilde{L}_{Cont}(r)$. 
By a variant of Lemma \ref{lem:CompC} (comparing treatments starting when the initial resistant population size is $r_{max}$ rather than $R_0$), 
it follows that $\tilde{N}_{alt}(r_2) \leq \tilde{N}_{Cont}(r_2)$, a contradiction.

\begin{figure}[htbp]
    \centering
     \includegraphics[scale=0.2]{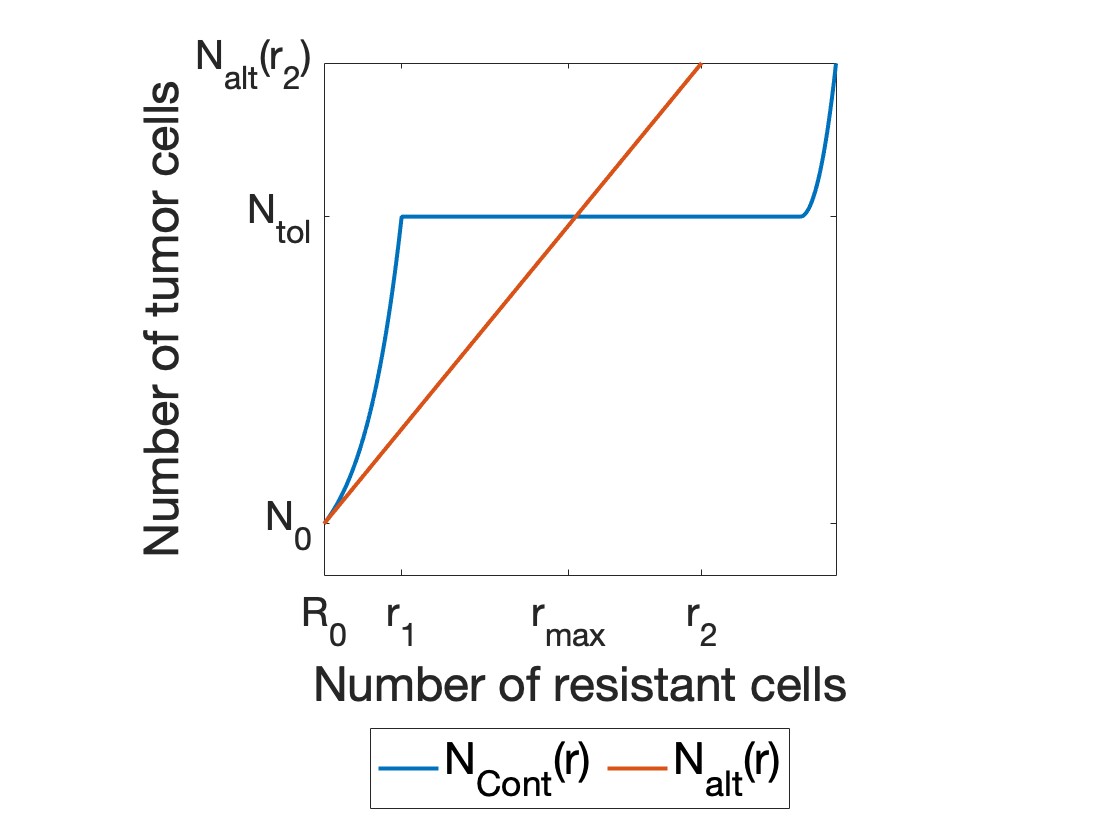}
     \caption{Comparison of $N_{Cont}(r)$ with an hypothetical curve which satisfies $N_{alt}(r_2)>N_{Cont}(r_2)$ for some point $r_2\geq  r_1$.}
     \label{fig:5}
\end{figure}

\noindent \emph{Proof of Proposition \ref{prop:alt2}.} 
Proof of a): Let $t_1 = \min\{t \geq 0, N_{Cont}(t) = N_{tol}\}$. For $t \leq t_1$,  
$R_{idCont}(t) = R_{noTreat}(t) \leq R_{alt}(t)$ (the inequality follows from Proposition \ref{prop:alt0}).  
If $t_{alt} \geq t_1$ then, as in Viossat and Noble, on $[t_1, t_{alt}]$, $N_{idCont}(t) \geq N_{tol} \geq N_{alt}(t)$ so $R_{idCont}(t) \leq R_{alt}(t)$ by Lemma \ref{lem:VN}. 
Thus, $R_{idCont}(t_{alt}) \leq R_{alt}(t_{alt}) \leq N_{alt}(t_{alt})= N_{tol}$. It follows that $t_{idCont} \geq t_{alt}$, since ideal containment fails when $R_{idCont} = N_{tol}$.

 Proof of b): assume now that  $S_{alt}(t_{alt})=0$ (which only makes sense for idealized alternative treatments). 
Then $R_{alt}(t_{alt}) = N_{alt}(t_{alt}) = N_{tol}$. Thus, as in Viossat and Noble: 
 \[N_{tol} = N_{alt}(t_{alt}) = R_{alt}(t_{alt}) \leq R_{idMTD}(t_{alt}),\] so $t_{idMTD} \leq t_{alt}$. This proves b1).  
 
 Let us prove the remaining results on ideal MTD. 
 The inequality $R_{alt} \leq R_{idMTD}$ was shown in the proof of Proposition \ref{prop:alt1} (see Eq. \ref{eq:bonus}). Moreover, on $[t_{idMTD}, t_{alt}]$, $N_{idMTD} = R_{idMTD} \geq N_{tol} \geq N_{alt}$, 
 and for $t \geq t_{alt}$, $N_{idMTD} = R_{idMTD} \geq R_{alt} = N_{alt}$. This proves parts of b2) and b3). 
 
We now prove the results on ideal containment. 
On $[t_{alt}, t_{idCont}]$, 
\[R_{idCont}(t) \leq N_{idCont}(t) \leq N_{tol} = N_{alt}(t_{alt}) = R_{alt}(t_{alt}) \leq R_{alt}(t) = N_{alt}(t)\]
where the last inequality comes from the fact that for all $t \geq t_{alt}$, $S_{alt}(t) =0$. 
Moreover, after treatment failure, $R_{alt}$ and $R_{idCont}$ both satisfy the autonomous equation $dR/dt = f_R(0, R)$. 
By invariance of solutions of autonomous equations through translation in time, this implies that for all $t \geq t_{idCont}$, 
$R_{idCont}(t) = R_{alt}(t - [t_{idCont} - t_{alt}]) \leq R_{alt}$. For $t \leq t_{idCont}$, the inequality $R_{idCont}(t) \leq R_{alt}(t)$ 
was derived in the proof of a). Therefore, $R_{idCont}(t) \leq R_{alt}(t)$ for all $t \geq 0$. 
Finally, for all $t$ in $[t_{alt}, t_{idCont}]$, $N_{idCont}(t) \leq N_{tol} = R_{alt}(t_{alt}) \leq R_{alt}(t) = N_{alt}(t)$, while for all $t \geq t_{idCont}$, $N_{idCont}(t) = R_{idCont}(t) \leq R_{alt}(t) = N_{alt}(t)$. This completes the proof.\\   

\noindent \emph{Proof of Proposition \ref{prop:int}}. Proof of a): The inequalities 
$R_{Cont} \leq R_{Int}$ and $R_{idCont} \leq R_{idInt}$ follow from the proof of 
Proposition \ref{prop:alt1} (see Eq. \eqref{eq:bonus}) and from Proposition \ref{prop:alt2}. 
The fact that $R_{Int} \leq R_{ContNmin}$ follows from Lemma \ref{L1L2} and the fact that, as shown below: for all $r$, $\tilde{N}_{ContNmin}(r) \leq \tilde{N}_{Int}(r)$. 
To prove this, note that for $r \leq r_{min} := \min\{r \geq R_0, \tilde{N}_{noTreat}(r) = N_{tol}\}$, both treatments coincide so $\tilde{N}_{ContNmin}(r) = \tilde{N}_{Int}(r)$. 
For $r \geq r_{min}$, the argument is as in the proof of $\tilde{N}_{Cont}(r) \geq \tilde{N}_{alt}(r)$ for $r \geq r_1$ in Proposition \ref{prop:alt1}. 
Similarly, it is easily seen that $\tilde{N}_{idContNmin}(r) \leq \tilde{N}_{idInt}(r)$ for all $r$, so $R_{idInt} \leq R_{idContNmin}$ by Lemma \ref{L1L2}. 

Proof of b): $N_{tol} = N_{idInt}(t_{idInt}) = R_{idInt}(t_{idInt}) \leq R_{idContNmin}(t_{idInt})$ by a), hence $t_{idContNmin} \leq t_{idInt}$. The second inequality follows from item a) of Proposition \ref{prop:alt2}.  

Proof of c): Using a), for $t \geq t_{idInt}$, $N_{idInt}= R_{idInt} \leq R_{idContMin} =N_{idContMin}$, and the first inequality follows from item b3) of Proposition \ref{prop:alt2}.\\

\noindent \emph{Proof of Proposition \ref{prop:ref}.} Proof of a1): to see that $R_{Int}(t) \leq R_{del-MTD}(t)$, note that as long as tumor size is lower than $N_{tol}$, both treatments coincide, then apply 
Proposition \ref{prop:alt0} from that point on. The other inequalities have already been proved. The proof of a2) is similar.  

Proofs of b) and c): in b), the inequality $t_{del-idMTD} \leq t_{idInt}$ follows from item b1) of Proposition \ref{prop:alt2}, applied from the (common) time when tumor size reaches $N_{tol}$ under both treatments, other inequalities were shown already. The proof of c) is as in Viossat and Noble.\\

\noindent \emph{Proof of Proposition \ref{prop:S}.}  We first need a lemma. 

\begin{lemma} \label{lem:S} 
a) Let $N^{\ast} \geq 0$. Consider a solution $(N, R)$ of \ref{VA_modelbis} under a treatment such that $L(t) = L_{max}$ whenever $N(t) > N^{\ast}$. 
Let $\bar{t} \geq 0$ be such that $N(\bar{t}) \geq N_{tol}$. If the sensitive population decreases when treated at 
$L_{max}$, then for all $t \geq \bar{t}$, $S(t) \leq S(\bar{t})$. If moreover $N(t) \geq N^{\ast}$ for all $t \geq \bar{t}$, 
then $S$ is non-increasing on $[\bar{t}, +\infty)$. 

b) Under containment (respectively, containment at $N_{min}$), once tumor size reaches $N_{tol}$ for the first time (respectively, $N_{min}$), the sensitive population is non-increasing. 
\end{lemma}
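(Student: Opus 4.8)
The plan is to combine the identity $S = N - R$ with two structural facts: $R$ is strictly increasing (since $f_R > 0$), and the treatment applies $L_{max}$ whenever $N > N^\ast$, which by the standing hypothesis that the sensitive population decreases under $L_{max}$ forces $dS/dt \le 0$ there. The threshold $N^\ast$ will enter only through the inequality $N(\bar t) \ge N^\ast$, which holds because $N(\bar t) \ge N_{tol}$ and, for every treatment we consider, $N^\ast \le N_{tol}$; I will therefore establish the first assertion under the cleaner hypothesis $N(\bar t) \ge N^\ast$. The conceptual point is to avoid trying to sign $dS/dt$ on the set $\{N = N^\ast\}$, where the dose need not equal $L_{max}$, and instead to bound $S$ algebraically when $N$ is small.

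To prove $S(t) \le S(\bar t)$ for $t \ge \bar t$, I would fix such a $t$ and distinguish two cases. If $N(t) \le N(\bar t)$, the bound is immediate and purely algebraic: using $R(t) \ge R(\bar t)$ one gets $S(t) = N(t) - R(t) \le N(\bar t) - R(\bar t) = S(\bar t)$. If instead $N(t) > N(\bar t)$, set $t_0 = \sup\{s \in [\bar t, t] : N(s) \le N(\bar t)\}$; by continuity $N(t_0) = N(\bar t)$ and $t_0 < t$, while on all of $(t_0, t]$ one has $N(s) > N(\bar t) \ge N^\ast$, hence $L(s) = L_{max}$ and $dS/ds \le 0$. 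Integrating gives $S(t) \le S(t_0) = N(\bar t) - R(t_0) \le N(\bar t) - R(\bar t) = S(\bar t)$, again because $R$ is increasing. This closes the first assertion.

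For the refinement that $S$ is \emph{non-increasing}, I would observe that the argument above is indifferent to the choice of starting time: it shows $S(t) \le S(a)$ for all $t \ge a$ whenever $N(a) \ge N^\ast$. Under the extra hypothesis $N(t) \ge N^\ast$ for all $t \ge \bar t$, this applies to every $a \in [\bar t, +\infty)$, so $S(b) \le S(a)$ whenever $\bar t \le a \le b$, which is precisely monotonicity. This route sidesteps any measure-theoretic analysis of the level set $\{N = N^\ast\}$.

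Part b) is then a direct application of the refinement. For containment at $N_{tol}$, take $N^\ast = N_{tol}$ and let $\bar t$ be the first time $N = N_{tol}$: the treatment uses $L_{max}$ when $N > N_{tol}$, and by construction containment keeps $N \ge N_{tol}$ afterwards, so $N \ge N^\ast$ on $[\bar t, +\infty)$ and $S$ is non-increasing. Containment at $N_{min}$ is identical with $N^\ast = N_{min}$ and $\bar t$ the first time $N = N_{min}$; note the refinement is used here with $N^\ast = N_{min} < N_{tol}$, so it is genuinely $N \ge N^\ast$ (and not $N(\bar t) \ge N_{tol}$) that matters. The only delicate point is regularity: since $L$ is merely piecewise $C^1$, the inequality $dS/ds \le 0$ holds off finitely many breakpoints, which suffices for the integration because $S$ is continuous. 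I expect this bookkeeping, rather than any genuine difficulty, to be the main obstacle, the crux being the realisation that comparing $N(t)$ to $N(\bar t)$ (not to $N^\ast$) turns the estimate into a one-line algebraic bound whenever $N$ is small.
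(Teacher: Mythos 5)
Your proof is correct and follows essentially the same route as the paper's: locate the last time $t_0$ at which the tumor was at or below the reference size, bound $S(t_0)\leq S(\bar{t})$ algebraically from $S=N-R$ with $R$ increasing, and bound $S(t)\leq S(t_0)$ via assumption (A2) since $L=L_{max}$ on $(t_0,t]$. Your reformulation of the hypothesis as $N(\bar{t})\geq N^{\ast}$ rather than $N(\bar{t})\geq N_{tol}$ is a small but genuine improvement over the paper's statement: it is exactly what is needed both to iterate the estimate from an arbitrary starting time $t_1\geq\bar{t}$ (where only $N(t_1)\geq N^{\ast}$ is known) when deducing monotonicity, and to apply part a) to containment at $N_{min}<N_{tol}$, where at the first time tumor size reaches $N_{min}$ one has $N(\bar{t})=N_{min}<N_{tol}$.
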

\begin{proof} a) The idea is that when $N > N^{\ast}$, $S$ is non-increasing by assumption, 
and when $N(t) \leq N^{\ast}$ for $t > \bar{t}$, the sensitive population must have decreased 
since time $\bar{t}$ because the resistant population increased (by assumption) and total tumor 
size did not. Formally, let $t \geq \bar{t}$. If for all $\tau$ in $(\bar{t}, t)$, 
$N(\tau) > N^{\ast}$, hence $L(\tau) = L_{max}$, then $S$ is non-increasing on $[\bar{t}, t]$ 
by assumption, therefore $S(t) \leq S(\bar{t})$. Otherwise, 
let $t_{max} = \max \{\tau \leq t, N(\tau) \leq N^{\ast} \}$. 
The previous argument implies that $S(t) \leq S(t_{max})$. Moreover, since $R$ is increasing,
\[S(t_{max}) = N(t_{max}) - R(t_{max}) \leq N_{tol} - R(t_{max}) \leq N(\bar{t}) - R(\bar{t}) = S(\bar{t})\]
Therefore, $S(t) \leq S(\bar{t})$. Finally, if for any $t_1 \geq \bar{t}$, $N(t_1) \geq N^{\ast}$, then the previous result applied from $t_1$ on shows that for any 
$t_2 \geq t_1$, $S(t_2) \leq S(t_1)$, hence $S$ is non-increasing on $[\bar{t}, \infty)$.

b) For containment, this follows from a) with $N^* = N_{tol}$ and the fact that once tumor size reaches $N_{tol}$ under containment, it never becomes smaller. The proof for containment at $N_{min}$ is the same with $N_{min}$ replacing $N_{tol}$. 
\end{proof}

We now prove Proposition \ref{prop:S}. Proof of a):  the first inequality is trivial since $S_{idMTD}=0$ (we only mentioned it to show that all inequalities from Eq. \ref{eq:bonus} are reversed). 
The inequality $S_{MTD}(t) \leq S_{alt}(t)$  follows from Lemma \ref{L1L2}, the fact that $\tilde{N}_{MTD}(r) \leq \tilde{N}_{alt}(r)$ 
(see Eq. \eqref{eq:tildeN}), 
and the fact that $\tilde{S}_{MTD}(r)$ is non-increasing by Assumption (A2). 
The last inequality follows from Assumption (A1), or, independently of (A1), from Lemma \ref{L1L2}, 
the fact that $\tilde{N}_{Cont}(r) \leq \tilde{N}_{noTreat}(r)$ (see Eq. \eqref{eq:tildeN}), and that once Containment starts treating, 
$S_{Cont}$ is non-increasing (Lemma \ref{lem:S}, item b)). 

Let us now prove that $S_{alt}(t) \leq S_{Cont}(t)$. 
Let $r_1 = \min\{r\geq R_0, \tilde{N}_{Cont}(r) = N_{tol}\}$. For $t \leq t_{Cont}(r_1)$, containment does not treat so 
$S_{Cont}(t) = S_{noTreat}(t) \geq S_{alt}(t)$ by Assumption (A1). 
Moreover, on $[r_1, R_{Cont}^{\infty})$, $\tilde{N}_{Cont}(r) \geq \tilde{N}_{alt}(r)$ and $\tilde{N}_{Cont}(r) \geq N_{tol}$, 
therefore $\tilde{S}_{Cont}(r)$ is non-increasing by Lemma \ref{lem:S}. Thus, by Lemma \ref{L1L2}, 
$S_{Cont}(t) \geq S_{alt}(t)$ for any $t$ in $[t_{Cont}(r_1), t_{alt}(R_{Cont}^{\infty}) \,)$. 

Finally, let $t \geq \max(t_{Cont}(r_1), t_{alt}(R_{Cont}^{\infty}))$, 
that is, such that $R_{Cont}(t) \geq r_1$ and $R_{alt}(t) \geq R_{Cont}^{\infty}$. 
Since $N_{Cont}(t) \geq N_{tol}$ for all $t \geq t(r_1)$, it follows from Lemma \ref{lem:S} that $S_{Cont}$ is non-increasing on $[t(r_1), +\infty[$, 
so $S_{Cont}(t) \geq S_{Cont}^{\infty}$. Thus, it suffices to show that $S_{alt}(t) \leq S_{Cont}^{\infty}$. There are two cases. 

Case 1: If $\tilde{N}_{alt}(R_{Cont}^{\infty}) \geq N_{tol}$, then by Lemma \ref{lem:S}, 
for all $t \geq t_{alt}(R_{Cont}^{\infty})$,  
\[S_{alt}(t) \leq S_{alt}(t_{alt}(R_{Cont}^{\infty}))= \tilde{S}_{alt} (R_{Cont}^{\infty}) \leq S_{Cont}^{\infty}\] 
where the last inequality follows from the fact that for $r < R_{Cont}^{\infty}$, $\tilde{S}_{alt}(r) \leq \tilde{S}_{Cont}(r)$ due to Eq. (1), so that 
\[\tilde{S}_{alt}(R_{Cont}^{\infty}) = \lim_{r \to R_{Cont}^{\infty}} \tilde{S}_{alt}(r) \leq \lim_{r \to R_{Cont}^{\infty}} \tilde{S}_{Cont}(r) = \lim_{t \to +\infty} S_{Cont}(t) = S_{Cont}^{\infty}\]

Case 2: If $\tilde{N}_{alt}(R_{Cont}^{\infty}) < N_{tol}$, then as long as $N_{alt}(t) \leq N_{tol}$, 
\[S_{alt}(t) \leq N_{tol} - R_{alt}(t) \leq N_{tol} - R_{Cont}^{\infty} \leq S_{Cont}^{\infty}\]
Moreover, if at some time $\bar{t}$, $N_{alt}(\bar{t}) = N_{tol}$ (which must indeed happen), then 
$S_{alt}(\bar{t}) \leq S_{Cont}^{\infty}$ by the previous argument, and for all $t \geq \bar{t}$, by Lemma \ref{lem:S}, 
$S_{alt}(t) \leq S_{alt}(\bar{t}) \leq S_{Cont}^{\infty}$. This concludes the proof of a).

Proof of b): The inequality $S_{ContNmin}(t) \leq S_{Int}(t)$ follows from Lemma \ref{L1L2}, the fact that $\tilde{N}_{contNmin}(r) \leq \tilde{N}_{Int}(r)$, and the fact that once tumor size reaches $N_{min}$, $S_{ContNmin}$ is non-increasing (Lemma \ref{lem:S}, item b)). The proof of $S_{Int}(t) \leq S_{Cont}(t)$ is as the proof of $S_{alt}(t) \leq S_{Cont}(t)$ (except that Assumption (A1) is not needed). Finally, the inequality $S_{del-MTD} \leq S_{Int}$ follows from $S_{MTD} \leq S_{alt}$ applied from the time at which tumor size reaches $N_{tol}$. 

Proof of c): 
we first prove $S_{idContNmin} \leq S _{idInt}$. Before tumor size reaches $N_{min}$, both treatments coincide, then until $t_{idContNmin}$,
 $N_{idContNmin} = N_{min} \leq N_{idInt}$ while $R_{idContNmin} \geq R_{idInt}$, so $S_{idContNmin} \leq S_{idInt}$. Finally, for $t \geq t_{idContNmin}$, $S_{idContNmin}(t)=0 \leq S_{idInt}(t)$. The proof of $S_{idInt} \leq S_{idCont}$ is similar. 

Proof of d): the first two inequalities are trivial, the third one was proved in c). The last inequality follows from (A1) but also, independently of (A1), from the following argument: for $t \leq t_{idCont}$, $N_{idCont} \leq N_{noTreat}$ while $R_{idCont} \geq R_{noTreat}$ by Proposition \ref{prop:ref}, so $S_{idCont} \leq S_{noTreat}$, and for $t \geq t_{idCont}$, $S_{idCont}=0$.

\section{Discussion}
\label{sec:disc} 

Viossat and Noble \cite{Viossat2021} provided qualitative conditions ensuring that a strategy aiming at containment, not elimination, 
minimizes resistance to treatment and is close to maximizing time to treatment failure. %
Some of these conditions were however debatable. In particular, their analysis did not allow for mutations from sensitive to resistant cells, 
a major concern of some key contributions to the field (Martin et al. 1992 \cite{Martin1992a}, Hansen et al. 2017 \cite{Hansen2017}), and did not apply to Norton-Simon models \cite{Norton1977}, 
which are standard to model chemotherapy. We showed how a refined analysis allows to handle these two issues. 
This suggests that containment strategies are likely to perform well in more general situations than was previously known. 

While Viossat and Noble compared across treatments the values of resistant and sensitive populations as a function of time, 
we first compare the induced trajectories in the R-N plane, that is, tumor sizes not at a given time, but when the resistant population reaches a given size. We made the additional assumption that the resistant population keeps increasing. This is consistent with the assumption that in the presence of fully resistant cells, 
the tumor is incurable, and is technically helpful (as the trajectory in the R-N plane is then the graph of a function), but we conjecture that our results hold without this assumption. 

What is crucial is that, all else being equal, a larger sensitive population leads to a lower resistant population growth-rate. For this reason, 
our analysis only allows for mutations from sensitive to resistant cells if an increase in the sensitive population size is more detrimental to 
the growth of the resistant population (through competition, or some other form of inhibition of resistant cells by sensitive cells) than it is beneficial 
(through mutations from sensitive to resistant cells). We show in Appendix \ref{app:mut} that this is typically the case for Gompertzian growth, or power-law models, at least in the absence of a strong resistance cost.\footnote{For logistic growth, our assumptions are likely to be valid if the variables $N$, $R$, $S$ are interpreted as densities, as in Strobl et al. 2020 \cite{Strobl2020a}, but not necessarily if they are interpreted as numbers of cells in the whole tumor, see Appendix \ref{app:mut}.}

There are however many other concerns with containment. Mutations, or phenotypic switching, could be modeled in other ways, 
and the fact that maintaining a relatively large tumor burden may lead to an accumulation of driver mutations remains a concern. 
Modeling patient death as occurring when the tumor reaches a critical size 
favors containment, and models in which the probability of death increases continuously 
with tumor size may lead to the conclusion that the expected survival time is lower under containment strategies than under more aggressive treatments (Mistry 2020\cite{Mistry2020}). Considering only two types of tumors cells is restrictive, 
and even with only two types, if resistant cells are only partially resistant, 
the logic changes, as the growth of resistant cells may be slowed down not only indirectly, through competition with sensitive cells, 
but also directly, through treatment effect. The impact of a containment strategy on the development of new metastases is also unclear. 
On the other hand, we did not consider additional benefits of containment, such as reduced treatment toxicity, less drug-induced mutations (Kuosmanen et al. 2021 \cite{Kuosmanen2021}) 
or a possible stabilization of tumor vasculature that could increase the efficiency of drug delivery (Enriquez-Navas et al. 2016 \cite{Enriquez-Navas2016}). 

This article should not be seen as providing unambiguous support for containment  strategies, but as part of a wider research program aiming at clarifying the conditions under which a strategy aiming at tumor stabilization is likely to perform better than a more aggressive treatment. Data allowing to fine-tune models is still scarce, but as new competition experiments are run, and new clinical trials open (NCT05393791, ACTOv/NCT05080556), more data should become available, allowing the community to reach more definite conclusions. 
\section{Acknowledgments}
This program has received funding from the European Union's Horizon 2020 research and innovation programme under the Marie Skłodowska-Curie grant agreement No 754362.
\appendix
\section{Mutations from sensitive to resistant cells}
\label{app:mut}
The analysis in this appendix is related to the work of Martin et al. (1992)\cite{Martin1992a}  and Hansen et al. (2017)\cite{Hansen2017}. Consider a basic Norton-Simon model with mutations (Norton and Simon 1977\cite{Norton1977}, Goldie and Coldman 1979\cite{Goldie1979}, Monro and Gaffney 2009\cite{Monro2009}):  
\begin{equation}
\begin{split}
\frac{dS}{dt} & =  g(N) \, (1 - L) S - \tau_1 g(N) S + \tau_2 g(N) R, \\
\frac{dR}{dt} & =  g(N) \, R + \tau_1 g(N) S - \tau_2  g(N) R,
\end{split}
\label{MG_mut}
\tag{Model 5}
\end{equation}
where $\tau_1$ and $\tau_2$ are mutation and backmutation rates. Taking $g(N)= \rho \ln(K/N)$ leads to a version of \ref{MG_model} with mutations: the original Monro and Gaffney model (Monro and Gaffney, 2009\cite{Monro2009}).

If the growth-rate function $g$ is decreasing in $N$, an increase in the size of the sensitive population leads to two opposite effects: it slows down the development of existing resistant cells (the competition effect), but usually increases the number of mutations from sensitive to resistant cells (the mutation effect). This trade-off has been studied by Martin et al. (1992\cite{Martin1992a}) and Hansen et al. (2017\cite{Hansen2017}). Here, we study whether such a model is compatible with our assumption that, during treatment, a larger sensitive population leads globally to a lower growth-rate of resistant cells. To do so, let $\phi_R$ denote the growth-rate function of resistant cells: 
$$\phi_R(S, R) =  g(N) R + \tau_1 g(N) S - \tau_2 g(N) R.$$
Denoting by $x_r = R/N$ the resistant fraction, it is easily checked that $\partial \phi_R / \partial S \leq 0$ if and only if: 
\begin{equation}
\label{eq:phis}
\frac{x_r}{\tau_1} (1-\tau_1 - \tau_2) + 1 \geq - \frac{g(N)}{Ng'(N)}.   
\end{equation}
Since the resistant fraction increases during treatment, this condition is bound to be hardest to satisfy at treatment initiation. 
The resistant fraction obtained from \ref{MG_mut} for the initial condition $S = 1$, $R=0$ is then (Goldie and Coldman, 1979\cite{Goldie1979}): 
\begin{equation} 
\label{eq:resfrac}
x_r = \frac{\tau_1}{\tau_1 +\tau_2} (1- N_0^{-\tau_1 - \tau_2}) \simeq \tau_1 \ln N_0,
\end{equation} 
where we used the approximation $N^{-\tau} \simeq 1 - \tau \ln N$ for $\tau$ small.  
Injecting \eqref{eq:resfrac} into \eqref{eq:phis} and using that $\tau_1$ and $\tau_2$ are much smaller than 1 leads to:
\begin{equation}
\label{eq:phis2}
\ln N_0  + 1 \geq - \frac{g(N_0)}{N_0g'(N_0)}.
\end{equation}  
Let us now consider various growth-models. 

Case 1 (power-law model): $g(N) = \rho N^{- \gamma}$ with $0 < \gamma < 1$. 
Eq. \eqref{eq:phis2} becomes: 
\[\ln N_0 + 1 \geq 1/\gamma.\] 
Typical choices for $\gamma$ are $\gamma = 1/3$ or $\gamma = 1/4$ (Gerlee, 2013\cite{Gerlee2013}; Benzekry et al., 2014\cite{Benzekry2014}; our $\gamma$ corresponds to  $1-\gamma$ in these references). The condition then holds by a huge margin for any detectable tumor size. 

Case 2 (Gompertzian growth): $g(N) = \rho \ln (K/N)$.
Eq \eqref{eq:phis2} becomes: 
\[\ln N_0 + 1  \geq \ln(K/N_0),\] 
which is satisfied if $K \leq e N^2$. Standard values of the carrying capacity in Gompertzian models are in the range $10^{12}-10^{13}$ (e.g., $K = 2 \times 10^{12}$ in Monro and Gaffney (2009)\cite{Monro2009}). Eq. \eqref{eq:phis2} is then satisfied for any detectable tumor size.

Case 3 (logistic growth) : $g(N) = \rho(1 - K/N)$. Eq \eqref{eq:phis2} becomes: 
\[\ln N_0 + 1 \geq \frac{K}{N_0}  - 1.\] 

This condition need not be satisfied, depending on the interpretation of the model and parameter choices.  For instance, Monro and Gaffney (2009)\cite{Monro2009} take $N_0 = 10^{10}$. Then $\ln N_0 \simeq 23$ and the condition is roughly $K \leq 2.5 \times 10^{11}$, which is not satisfied for standard values of the carrying capacity $K$.\footnote{The choice of carrying capacity may differ for a Gompertz or a logistic growth model. However, in Monro and Gaffney (2009)\cite{Monro2009}, the lethal tumor size is taken to be $5 \times 10^{11}$ so in a logistic growth version of their model, $K$ would have to be at least that large and Eq. \eqref{eq:phis2} would not be satisfied.} The condition would however be satisfied for larger initial tumor sizes, modeling late-stage treatments. Actually, when logistic growth models are used in the adaptive therapy literature, the initial tumor size is often assumed to be a large fraction of the carrying capacity (e.g., Zhang et al. 2017\cite{Zhang2017}, Strobl et al. 2020\cite{Strobl2020a}, Mistry 2020\cite{Mistry2020}). This may be interpreted as modeling late-stage treatments, or as a model of local growth. In the latter case, the carrying capacity should be seen as the maximal number of cells \emph{for the current tumor volume} (or equivalently, the variables $N$, $S$, $R$, $K$ should be interpreted as densities). Assuming $10^{10}$ tumor cells at tumor initiation, the estimate $x_r/\tau_1 \simeq 23$ would still be valid, and Eq. \eqref{eq:phis} would become $K/N \leq 25$, which is bound to be satisfied in a model of local growth. 

Let us now consider three variants of \ref{MG_mut}. 

\emph{Variant 1: birth-death model.} In \ref{MG_mut}, the number of mutations is assumed proportional to the net growth-rate of the tumor. It would be natural to assume that the number of mutations is proportional to the net birth-rate. This would  increase the effective mutation rate (that is, the average number of mutations relative to a given increase of tumor size).\footnote{This is the exact effect when the turnover rate $b(N) / (b(N) - d(N))$ is constant, where $b(N)$ and $d(N)$ are the birth and death rates.} 
However, since the condition we found is insensitive to the precise mutation rates $\tau_1$ and $\tau_2$, this is unlikely to affect the previous analysis. 

\emph{Variant 2: late-stage treatment.} The previous analysis is better suited for a first line treatment than a second or third line treatment, especially if resistance to the first treatment may be associated with resistance to ulterior ones. However, in such a situation (late-stage treatment), the initial resistant population is likely to be larger than the one given by \eqref{eq:resfrac}, and so condition \eqref{eq:phis} is more likely to be satisfied.  

\emph{Variant 3: cost of resistance in the baseline growth-rate.} Consider the following variant of \ref{MG_mut}, with a different growth-rate parameter for sensitive cells and for resistant cells: 
\begin{equation}
\begin{split}
\frac{dS}{dt} & = \rho_s g(N) \, (1 - L) S  - \tau_1 \rho_s g(N) S + \tau_2 \rho_r g(N) R, \\
\frac{dR}{dt} & = \rho_r g(N) \, R + \tau_1 \rho_r g(N) S - \tau_2 \rho_r g(N) R.
\label{MG_mut_cost}
\end{split}
\tag{Model 6}
\end{equation}
(the terms $g(N)$ in \ref{MG_mut} correspond here to terms of the form $\rho g(N)$, with $\rho = \rho_s$ or $\rho =\rho_r$.) The absolute growth-rate of resistant cells is now 
\[\phi_R(S, R) = \rho_r g(N) R  + \tau_1 \rho_s g(N) S - \tau_2 \rho_r g(N) R.\] 
The condition for $\partial \phi_R / \partial S$ to be nonpositive becomes: 
\begin{equation}
\label{eq:phircost}
\frac{x_r}{\tau_1} (\rho_r/\rho_s - \tau_2 \rho_r/\rho_s - \tau_1) + 1 \geq - \frac{g(N)}{N g'(N)}.
\end{equation}
Moreover, if $\rho_r$ is substantially smaller than $\rho_s$, then the resistant fraction at treatment initiation is no longer given by \eqref{eq:resfrac} but approximately by (see Viossat and Noble, 2020\cite{Viossat2020}, Section 7):
\begin{equation}
\label{eq:resfraccost}
x_r = \frac{\tau_1}{1- \rho_r/\rho_s}.
\end{equation}
Injecting \eqref{eq:resfraccost} into \eqref{eq:phircost} and using that $\tau_1$ and $\tau_2$ are much smaller than $1$ leads to the condition: 
\begin{equation}
\label{eq:condcost2}
\frac{1}{1 - \rho_r/\rho_s} \geq - \frac{g(N)}{N g'(N)}.
\end{equation} 
For a Power-law model, 
the condition becomes $\rho_r/\rho_s \geq 1 - \gamma$.  
For $\gamma = 1/3$, this is satisfied if and only if $\rho_r/\rho_s \geq 2/3$.  that is, if and only if the resistance cost is not too large. 

For a Gompertzian model, the right-hand side is $\ln(K/N)$ and Eq. \eqref{eq:condcost2} may be written: $\rho_r/\rho_s \geq 1- 1/\ln(K/N)$. With Monro and Gaffney's (2009) \cite{Monro2009}  values: $N_0 = 10^{10}$, $K=2 \times 10^{12}$, this is satisfied if $\rho_r/\rho_s \geq 0.81$. 

With logistic growth, the right-hand side is $K/N - 1$ and the condition may be written as $\rho_r/\rho_s \geq (K-2N)/(K-N)$.\footnote{This condition is approximately correct only if $\rho_r/\rho_s$ is substantially different from 1, which explains that taking the limit $\rho_r/\rho_s \to 1$ does not lead to the condition obtained in the absence of a resistance cost.}  Assuming for instance $\rho_r/\rho_s = 4/5$, this boils down to $K \leq 6 N$. This would not be satisfied at treatment initiation if $N$ and $K$ represent total numbers of cells in the whole tumor (except possibly for a late-stage treatment), but seems likely to be satisfied in a model of local growth.

We conclude that in the absence of resistance costs, our analysis applies to several standard models of tumor growth with mutations, such as Power-law models or Gompertzian growth, and possibly to logistic growth, at least when it models local growth. However, if the baseline growth-rate of resistant cells is substantially smaller than the baseline growth-rate of sensitive cells, our assumptions become more restrictive and might fail even for Gompertzian growth. 
This is in line with Hansen et al.'s (2017)\cite{Hansen2017} finding that, contrary to common wisdom, 
a resistance cost in the baseline growth rate may make it 
less likely that containment strategies outperform 
more aggressive treatments. Note however that the fact that 
we can no longer prove that containment outperforms MTD does 
not mean that it would not do so. Moreover, the analysis 
of Viossat and Noble (2020)\cite{Viossat2020}, Section 7 
of the supplementary material, suggests that the mutation 
effect could only make MTD marginally superior to containment. 
  
\section{Comparison principles}
\label{app:cp}
The following comparison principle is standard: 
\begin{prop}
\label{prop:cpstandard}
Let $\Omega$ be a nonempty open subset of $\R^2$. Let $f : \Omega \to \R$ be continuously differentiable. Consider the ordinary differential equation $x'(t) = f(t, x(t))$. Let  $t_1 \geq t_0$. Let $u : [t_0, t_1] \to \R$ be solution of this ODE and let $v : [t_0, t_1] \to \R$ be a subsolution. That is, $v$ is continuous, almost everywhere differentiable, $(t, v(t)) \in \Omega$ on $[t_0, t_1]$, and, almost everywhere, $v'(t) \leq f(t, v(t))$. If furthermore $v(t_0) \leq u(t_0)$, then $v(t) \leq u(t)$ for all $t \in [t_0, t_1]$. 
\end{prop}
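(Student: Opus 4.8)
The plan is to use a perturbation argument that converts the non-strict differential inequality satisfied by $v$ into a strict one, thereby avoiding any delicate tangency analysis at a first crossing. First I would record the two facts that make everything local. Since $u$ and $v$ are continuous on the compact interval $[t_0,t_1]$, their graphs are compact subsets of the open set $\Omega$, so there is a compact set $K \subset \Omega$ containing a tube around both graphs; and because $f$ is $C^1$, the partial derivative $\partial f/\partial x$ is bounded on $K$, say by $L$, so $f$ is $L$-Lipschitz in its second argument on $K$. For each small $\eps > 0$ let $u_\eps$ solve the raised equation $u_\eps'(t) = f(t,u_\eps(t)) + \eps$ with $u_\eps(t_0) = u(t_0) + \eps$. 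Standard continuous dependence on initial data and parameters (valid since $f$ is locally Lipschitz) guarantees that, for $\eps$ small enough, $u_\eps$ is defined on all of $[t_0,t_1]$, stays inside $K$, and converges uniformly to $u$ as $\eps \to 0$.

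Next I would show that $v(t) < u_\eps(t)$ for every $t \in [t_0,t_1]$. At the left endpoint this holds because $v(t_0) \le u(t_0) < u(t_0)+\eps = u_\eps(t_0)$. Suppose it failed, and let $t^\ast$ be the first time the gap $\psi := u_\eps - v$ vanishes, so that $\psi > 0$ on $[t_0,t^\ast)$ and $\psi(t^\ast)=0$; here necessarily $t^\ast > t_0$. At almost every $t < t^\ast$, using the subsolution inequality $v'(t) \le f(t,v(t))$ and the Lipschitz bound,
\[
\psi'(t) = f(t,u_\eps(t)) + \eps - v'(t) \ge \eps - \bigl(f(t,v(t)) - f(t,u_\eps(t))\bigr) \ge \eps - L\,\psi(t).
\]
By continuity of $\psi$ at $t^\ast$ there is $\delta>0$ with $\psi(t) < \eps/(2L)$ on $(t^\ast-\delta,t^\ast]$, whence $\psi'(t) \ge \eps/2 > 0$ almost everywhere there. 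Integrating over $[t^\ast-\delta/2,\,t^\ast]$ gives $\psi(t^\ast) \ge \psi(t^\ast-\delta/2) + \eps\delta/4 > 0$, contradicting $\psi(t^\ast)=0$. Hence $v < u_\eps$ throughout, and letting $\eps \to 0$ along the uniform convergence $u_\eps \to u$ yields $v(t) \le u(t)$ for all $t$.

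The step that requires genuine care, and the main obstacle, is the integration in the previous paragraph: it uses that $\psi$ is recovered from its almost-everywhere derivative, i.e. that $\psi(t^\ast)-\psi(t^\ast-\delta/2) = \int \psi'$. Since $u_\eps$ is $C^1$ this reduces to the analogous property for $v$, namely that $v$ be (locally) absolutely continuous, $v(t) = v(t_0) + \int_{t_0}^t v'(s)\,ds$. This is strictly stronger than mere almost-everywhere differentiability, and without it the statement is in fact false: the Cantor function is continuous and a.e. differentiable with derivative zero, hence a subsolution of $x'=1$ agreeing at $t_0$ with the solution $u(t)=t$, yet it exceeds $u$ in the interior. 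I would therefore either add absolute continuity to the hypotheses, or simply invoke the fact that the subsolutions arising in this paper are piecewise continuously differentiable, hence absolutely continuous; on each smooth piece the fundamental theorem of calculus applies and the argument above goes through verbatim.
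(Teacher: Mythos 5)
The paper gives no proof of Proposition \ref{prop:cpstandard}: it is simply labelled ``standard'' and invoked. Your perturbation argument --- raising the right-hand side and the initial datum by $\varepsilon$, ruling out a first crossing of $v$ with $u_\varepsilon$ via the local Lipschitz bound, then letting $\varepsilon\to 0$ --- is the classical textbook proof of this comparison principle, and the details you give are sound. One minor point: the Lipschitz estimate $|f(t,v(t))-f(t,u_\varepsilon(t))|\le L\,\psi(t)$ requires the vertical segment joining $(t,v(t))$ and $(t,u_\varepsilon(t))$ to stay in the region where you control $\partial f/\partial x$, and a union of tubes around the two graphs need not contain that segment when the graphs are far apart; but since you only apply the estimate where $\psi(t)<\varepsilon/(2L)$, the two points are close and the segment sits in a small ball around the graph of $v$ inside $\Omega$, so the argument survives as written.

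More importantly, your closing observation is a genuine catch rather than a pedantic one: as stated, with $v$ only continuous and almost-everywhere differentiable, the proposition is false, and the Cantor function against $f\equiv 1$ is a correct counterexample (it is a.e.\ differentiable with derivative $0\le 1$, agrees with $u(t)=t$ at $t=0$, yet exceeds it at $t=1/3$). The missing hypothesis is exactly what your integration step needs, namely that $\psi=u_\varepsilon-v$ satisfies the fundamental theorem of calculus, i.e.\ that $v$ is absolutely continuous (or, alternatively, that the differential inequality holds everywhere for a Dini derivative). Your proposed repair is the right one and is harmless for the paper: every subsolution to which Proposition \ref{prop:cpstandard} is actually applied (via Proposition \ref{prop:cp}, e.g.\ $R_1$ in Lemma \ref{L1L2} and $\tilde{N}_1$ in Lemma \ref{lem:CompC}) is piecewise continuously differentiable in the strong sense defined in Section \ref{sec:model}, hence absolutely continuous, so the corrected statement covers all uses. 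The statement of the proposition should nonetheless be amended to require absolute continuity of $v$.
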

We want to apply a variant of this result to two equations: first, 
\[
\frac{d\tilde{N}}{dr} = G(r, \tilde{N}(r)) \mbox{ where } G(r, \tilde{N}) = \frac{f_N(\tilde{N}, r, \tilde{L}_2(r))}{f_R(\tilde{N}, r)},
\]
(here $\tilde{N}(r)$ plays the role of $u(t)$, and $G$ the role of $f$ in Proposition \ref{prop:cpstandard}). Second, 
\[\frac{dR}{dt} = H(R(t)) \mbox{ where } H(R) = f_R(\tilde{N}_2(R), R), \]
where $\tilde{L}_2$ and $\tilde{N}_2$ are not continuously differentiable (otherwise Proposition \ref{prop:cpstandard} would directly apply), but piecewise $C^1$ in the strong sense we defined in Section \ref{sec:model}. For instance, for $\tilde{L}_2$, which is defined on $[R_0, R_2^{\infty})$, there exist values $r_0 = R_0 < r_1 < ... < r_n = R_2^{\infty}$ such that for each $i$ in $\{1,.., n-1\}$, $\tilde{L}_2$ coincides on $[r_i, r_{i+1})$ with a continuously differentiable function $\tilde{L}_2^i$ defined on a neighborhood of $[r_i, r_{i+1})$. 

We thus need variants of Proposition \ref{prop:cpstandard} where $f$ is slightly less regular. The proof of these variants consists in repeated applications of Proposition \ref{prop:cpstandard}. 

\begin{prop}
\label{prop:cp}  
The conclusion $u(t) \leq v(t)$ on $[t_0, t_1]$ of Proposition \ref{prop:cpstandard} still holds in the following cases: 

a) if $f$ takes the form $f(t, x) = \psi(t, x, \phi(t))$, where $\psi$ is continuously differentiable and $\phi$ is strongly piecewise $C^1$.  


b) if $f$ is a function of $x$ only ($f : I \to \R$, where $I$ is a nonempty interval) which 
is strongly piecewise $C^1$, 
and $u$ and $v$ are strictly increasing. 
\end{prop}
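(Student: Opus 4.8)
The plan is to reduce both cases of Proposition~\ref{prop:cp} to repeated application of the standard comparison principle (Proposition~\ref{prop:cpstandard}) on the subintervals where the irregular data are genuinely $C^1$. The strong piecewise $C^1$ hypothesis is exactly what makes this work: on each piece $[r_i, r_{i+1})$ the relevant $f$ coincides with a continuously differentiable function defined on a \emph{neighborhood} of the closed interval, so Proposition~\ref{prop:cpstandard} applies verbatim on each $[r_i, r_{i+1}]$ (including the right endpoint, by the neighborhood clause). The only thing to check is that the conclusion propagates across the breakpoints, which follows from continuity of $u$ and $v$.

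For case a), I would fix the partition $t_0 = s_0 < s_1 < \dots < s_n = t_1$ associated to the strongly piecewise $C^1$ function $\phi$. On $[s_0, s_1]$, the map $(t,x) \mapsto \psi(t, x, \phi_0(t))$ (where $\phi_0$ is the $C^1$ extension of $\phi$ near $[s_0,s_1]$) is continuously differentiable, $u$ solves $u' = \psi(t,u,\phi_0(t))$ there, and $v$ is a subsolution of the same equation with $v(s_0) \le u(s_0)$; Proposition~\ref{prop:cpstandard} gives $v(t) \le u(t)$ on $[s_0,s_1]$. In particular $v(s_1) \le u(s_1)$ by continuity, which is the initial inequality needed to restart the argument on $[s_1, s_2]$. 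Iterating over the finitely many pieces yields $v \le u$ on all of $[t_0,t_1]$. I would note that $u$ and $v$ take the same values at each $s_i$ whether one uses the left or right extension of $\phi$, so no ambiguity arises at the breakpoints.

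For case b), the function $f(x)$ is strongly piecewise $C^1$ as a function of $x$, not of $t$, so the partition lives in the state space, and here the hypothesis that $u$ and $v$ are \emph{strictly increasing} becomes essential. The strict monotonicity lets me translate the state-space breakpoints $\{x_i\}$ into an increasing sequence of times via the inverses $u^{-1}$ and $v^{-1}$: on each maximal subinterval where both $u(t)$ and $v(t)$ lie in a single piece $[x_i, x_{i+1})$, the restriction of $f$ is $C^1$ and Proposition~\ref{prop:cpstandard} applies. The delicate bookkeeping is that $u$ and $v$ may cross a given breakpoint $x_i$ at different times, so one must order all the crossing times of both curves into a common refinement $t_0 = \sigma_0 < \sigma_1 < \dots < \sigma_m = t_1$ such that on each $[\sigma_j, \sigma_{j+1}]$ neither $u$ nor $v$ exits the interior of a single piece; on such a subinterval $f$ agrees with a genuine $C^1$ function and the standard principle fires.

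The main obstacle I anticipate is precisely this interface handling in case b): because $v$ is only a \emph{sub}solution (and only almost-everywhere differentiable), I cannot assume it is piecewise-smooth in any structured way, so I would want the partition points to be chosen from the curves $u$ and $v$ themselves rather than hoping the data line up. The clean way to finesse this is to observe that since $u$ is strictly increasing and $f$ is piecewise $C^1$, the composite right-hand side $t \mapsto f(u(t))$ inherits finitely many discontinuities, and likewise along $v$; taking the union of the finitely many times where either curve hits a state-space breakpoint gives the common partition, on whose interiors the relevant $f$ is $C^1$. Continuity of both $u$ and $v$ then carries the inequality $v(\sigma_j) \le u(\sigma_j)$ from one subinterval to the next, and finiteness of the partition closes the induction.
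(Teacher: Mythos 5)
Your part a) is exactly the paper's argument: partition $[t_0,t_1]$ at the breakpoints of $\phi$, apply Proposition~\ref{prop:cpstandard} on each piece using the $C^1$ extension, and propagate the inequality across breakpoints by continuity of $u$ and $v$. Your part b) also follows the paper's strategy --- use strict monotonicity of $u$ and $v$ to pull the finitely many state-space breakpoints $x_k$ back to a common finite partition of $[t_0,t_1]$ by crossing times --- but your case analysis on each subinterval is incomplete. You assert that on each subinterval of the refinement ``$f$ agrees with a genuine $C^1$ function and the standard principle fires,'' which is only true when $u$ and $v$ lie in the \emph{same} piece $[x_k, x_{k+1})$ there. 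Nothing in the construction forces this: if $v(\sigma_j) < u(\sigma_j)$ the two curves may sit in different pieces throughout $(\sigma_j,\sigma_{j+1})$, and then $f$ restricted to the union of their ranges is not a single $C^1$ function, so Proposition~\ref{prop:cpstandard} does not apply. The paper handles this by observing that in that situation some breakpoint $x_k$ separates the two curves on the whole subinterval, i.e.\ $v(t) \leq x_k \leq u(t)$, so the desired inequality holds trivially without invoking the comparison principle at all. Adding that one-line dichotomy closes the gap; everything else in your proposal matches the paper's proof.
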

\begin{proof}
Proof of a): by assumption, there exists an integer $n$ and real numbers $(\tau_k)_{0 \leq k \leq n}$ satisfying $t_0 = \tau_0 < \tau_1 < ... < \tau_n = t_1$ such that on $A_k = [t_k, t_{k+1}) \times \R$, $f$ coincides with a continuously differentiable function $f_k$ defined on a neighborhood of $A_k$. Assuming $u(\tau_k) \leq v(\tau_k)$, Proposition 
\ref{prop:cpstandard} implies that $u(t) \leq v(t)$ on $[t_k, t_{k+1})$ and this is also true at time $t_{k+1}$ by continuity of $u$ and $v$. An induction argument then gives the result. 

Proof of b): By assumption, there exist a finite number of values $x_0$,...,$x_n$ such that $f$ coincides on $[x_k, x_{k+1})$ with a continuously differentiable function $\phi_k$ defined on a neighborhood of $[x_k, x_{k+1})$, and we may assume (up to adding an artificial point) that $x_0 \leq u(t_0)$. Since $u$ and $v$ are strictly increasing, there exists an integer $q \leq 2(n+1)$ and a sequence of times $\tau_0 = t_0 < \tau_1 < ... < \tau_q = t_1$  such that on $(\tau_j, \tau_{j+1})$, $u(t)$ and $v(t)$ are never equal to one of the $x_k$. Assuming $u(\tau_j) \leq v(\tau_j)$, then either: for all $t$ in  $[\tau_j, \tau_{j+1})$, $u(t)$ and $v(t)$ belong to the same interval $[x_k, x_{k+1})$ and Proposition \ref{prop:cpstandard} implies that $u(t) \leq v(t)$ on $[\tau_j, \tau_{j+1})$; or there exists $k$ such that for all $t$  in $[\tau_j, \tau_{j+1})$, $u(t) \leq x_k \leq v(t)$ and the same inequality holds trivially. In both cases,  $u(\tau_{k+1}) \leq v(\tau_{k+1})$ by continuity of $u$ and $v$. An induction argument then gives the result. 
\end{proof}

\bibliography{biblioVA}

\begin{thebibliography}{10}

\bibitem{Gatenby2009b}
Robert~A Gatenby.
\newblock {A change of strategy in the war on cancer}.
\newblock {\em Nature}, 459:508--509, may 2009.

\bibitem{Enriquez-Navas2016}
Pedro~M Enriquez-Navas, Yoonseok Kam, Tuhin Das, Sabrina Hassan, Ariosto Silva,
  Parastou Foroutan, Epifanio Ruiz, Gary Martinez, Susan Minton, Robert~J.
  Gillies, and Robert~A. Gatenby.
\newblock {Exploiting evolutionary principles to prolong tumor control in
  preclinical models of breast cancer}.
\newblock {\em Science Translational Medicine}, 8(327):327ra24--327ra24, feb
  2016.

\bibitem{Cunningham2019}
Jessica~J. Cunningham.
\newblock {A call for integrated metastatic management}.
\newblock {\em Nature Ecology and Evolution}, 3(7):996--998, jun 2019.

\bibitem{Gatenby2009a}
Robert~A Gatenby, Ariosto~S Silva, Robert~J Gillies, and B~R Frieden.
\newblock {Adaptive Therapy}.
\newblock {\em Cancer Research}, 69(11):4894--4903, jun 2009.

\bibitem{Silva2012}
Ariosto~S. Silva, Yoonseok Kam, Zayar~P. Khin, Susan~E. Minton, Robert~J.
  Gillies, and Robert~A. Gatenby.
\newblock {Evolutionary Approaches to Prolong Progression-Free Survival in
  Breast Cancer}.
\newblock {\em Cancer Research}, 72(24):6362--6370, dec 2012.

\bibitem{Zhang2017}
Jingsong Zhang, Jessica~J Cunningham, Joel~S Brown, and Robert~A Gatenby.
\newblock {Integrating evolutionary dynamics into treatment of metastatic
  castrate-resistant prostate cancer}.
\newblock {\em Nature Communications}, 8(1):1816, dec 2017.

\bibitem{Bacevic2017a}
Katarina Bacevic, Robert Noble, Ahmed Soffar, Orchid {Wael Ammar}, Benjamin
  Boszonyik, Susana Prieto, Charles Vincent, Michael~E. Hochberg, Liliana
  Krasinska, and Daniel Fisher.
\newblock {Spatial competition constrains resistance to targeted cancer
  therapy}.
\newblock {\em Nature Communications}, 8(1):1995, dec 2017.

\bibitem{Smalley2019}
Inna Smalley, Eunjung Kim, Jiannong Li, Paige Spence, Clayton~J. Wyatt, Zeynep
  Eroglu, Vernon~K. Sondak, Jane~L. Messina, Nalan~Akgul Babacan, Silvya~Stuchi
  Maria-Engler, Lesley {De Armas}, Sion~L. Williams, Robert~A. Gatenby, Y.~Ann
  Chen, Alexander~R.A. Anderson, and Keiran~S.M. Smalley.
\newblock Leveraging transcriptional dynamics to improve braf inhibitor
  responses in melanoma.
\newblock {\em EBioMedicine}, 48:178--190, oct 2019.

\bibitem{Strobl2020a}
Maximilian~A.R. Strobl, Jeffrey West, Yannick Viossat, Mehdi Damaghi, Mark
  Robertson-Tessi, Joel~S Brown, Robert~A Gatenby, Philip~K Maini, and
  Alexander~R.A. Anderson.
\newblock Turnover modulates the need for a cost of resistance in adaptive
  therapy.
\newblock {\em Cancer Research}, nov 2020.

\bibitem{Bondarenko2021}
Maryna Bondarenko, Marion Le~Grand, Yuval Shaked, Ziv Raviv, Guillemette
  Chapuisat, C{\'e}cile Carr{\`e}re, Marie-Pierre Montero, Mailys Rossi, Eddy
  Pasquier, Manon Carr{\'e}, et~al.
\newblock Metronomic chemotherapy modulates clonal interactions to prevent drug
  resistance in non-small cell lung cancer.
\newblock {\em Cancers}, 13(9):2239, may 2021.

\bibitem{Wang2021a}
Jiali Wang, Yixuan Zhang, Xiaoquan Liu, and Haochen Liu.
\newblock Is the fixed periodic treatment effective for the tumor system
  without complete information?
\newblock {\em Cancer Management and Research}, 13:8915, nov 2021.

\bibitem{Wang2021b}
Jiali Wang, Yixuan Zhang, Xiaoquan Liu, and Haochen Liu.
\newblock Optimizing adaptive therapy based on the reachability to tumor
  resistant subpopulation.
\newblock {\em Cancers}, 13(21), oct 2021.

\bibitem{Farrokhian2022}
Nathan Farrokhian, Jeff Maltas, Mina Dinh, Arda Durmaz, Patrick Ellsworth,
  Masahiro Hitomi, Erin McClure, Andriy Marusyk, Artem Kaznatcheev, and Jacob~G
  Scott.
\newblock Measuring competitive exclusion in non-small cell lung cancer.
\newblock {\em bioRxiv}, 2022.

\bibitem{Mistry2021}
Hitesh~B Mistry.
\newblock On the reporting and analysis of a cancer evolutionary adaptive
  dosing trial.
\newblock {\em Nature Communications}, 12(316), jan 2021.

\bibitem{Zhang2021}
Jingsong Zhang, Jessica~J Cunningham, Joel~S Brown, and Robert~A Gatenby.
\newblock Response to mistry.
\newblock {\em Nature Communications}, 12(329), jan 2021.

\bibitem{Martin1992a}
R.~B. Martin, M.~E. Fisher, R.~F. Minchin, and K.~L. Teo.
\newblock {Optimal control of tumor size used to maximize survival time when
  cells are resistant to chemotherapy}.
\newblock {\em Mathematical Biosciences}, 110(2):201--219, jul 1992.

\bibitem{Monro2009}
Helen~C. Monro and Eamonn~A. Gaffney.
\newblock {Modelling chemotherapy resistance in palliation and failed cure}.
\newblock {\em Journal of Theoretical Biology}, 257(2):292--302, mar 2009.

\bibitem{Carrere2017}
C{\'{e}}cile Carr{\`{e}}re.
\newblock {Optimization of an in vitro chemotherapy to avoid resistant
  tumours}.
\newblock {\em Journal of Theoretical Biology}, 413:24--33, jan 2017.

\bibitem{Hansen2017}
Elsa Hansen, Robert~J Woods, and Andrew~F Read.
\newblock {How to Use a Chemotherapeutic Agent When Resistance to It Threatens
  the Patient}.
\newblock {\em Plos Biology}, 15(2):e2001110, feb 2017.

\bibitem{Gallaher2018b}
Jill~A Gallaher, Pedro~M Enriquez-Navas, Kimberly~A Luddy, Robert~A Gatenby,
  and Alexander~R.A. Anderson.
\newblock {Spatial Heterogeneity and Evolutionary Dynamics Modulate Time to
  Recurrence in Continuous and Adaptive Cancer Therapies}.
\newblock {\em Cancer Research}, 78(8):2127--2139, apr 2018.

\bibitem{Cunningham2018}
Jessica~J. Cunningham, Joel~S. Brown, Robert~A. Gatenby, and Kate\v{r}ina
  Sta\v{n}kov{\'{a}}.
\newblock {Optimal control to develop therapeutic strategies for metastatic
  castrate resistant prostate cancer}.
\newblock {\em Journal of Theoretical Biology}, 459:67--78, dec 2018.

\bibitem{Pouchol2018}
Camille Pouchol, Jean Clairambault, Alexander Lorz, and Emmanuel Tr{\'{e}}lat.
\newblock {Asymptotic analysis and optimal control of an integro-differential
  system modelling healthy and cancer cells exposed to chemotherapy}.
\newblock {\em Journal de Math{\'{e}}matiques Pures et Appliqu{\'{e}}es},
  116:268--308, aug 2018.

\bibitem{Carrere2020}
C{\'{e}}cile Carr{\`{e}}re and Hasnaa Zidani.
\newblock {Stability and reachability analysis for a controlled heterogeneous
  population of cells}.
\newblock {\em Optimal Control Applications and Methods}, 41(5):1678--1704, sep
  2020.

\bibitem{Cunningham2020}
Jessica Cunningham, Frank Thuijsman, Ralf Peeters, Yannick Viossat, Joel Brown,
  Robert Gatenby, and Kate{\v{r}}ina Sta{\v{n}}kov{\'a}.
\newblock Optimal control to reach eco-evolutionary stability in metastatic
  castrate-resistant prostate cancer.
\newblock {\em Plos one}, 15(12):e0243386, dec 2020.

\bibitem{Viossat2021}
Yannick Viossat and Robert Noble.
\newblock A theoretical analysis of tumour containment.
\newblock {\em Nature Ecology \& Evolution}, 5(6):826--835, jun 2021.

\bibitem{Laird1964}
Anna~Kane Laird.
\newblock {Dynamics of tumor growth}.
\newblock {\em British Journal of Cancer}, 18(3):490--502, sep 1964.

\bibitem{Norton1977}
Larry~W Norton and R{\'{e}}mi Simon.
\newblock {Tumor size, sensitivity to therapy, and design of treatment
  schedules.}
\newblock {\em Cancer treatment reports}, 61(7):1307--1317, oct 1977.

\bibitem{Gerlee2013}
Philip Gerlee.
\newblock {The model muddle: in search of tumor growth laws.}
\newblock {\em Cancer research}, 73(8):2407--11, apr 2013.

\bibitem{Martin1992}
R~B Martin, M~E Fisher, R~F Minchin, and K~L Teo.
\newblock {Low-intensity combination chemotherapy maximizes host survival time
  for tumors containing drug-resistant cells}.
\newblock {\em Mathematical Biosciences}, 110(2):221--252, jul 1992.

\bibitem{Mistry2020}
Hitesh~B. Mistry.
\newblock {Evolutionary Based Adaptive Dosing Algorithms: Beware the Cost of
  Cumulative Risk}.
\newblock {\em bioRxiv}, 2020.

\bibitem{Kuosmanen2021}
Teemu Kuosmanen, Johannes Cairns, Robert Noble, Niko Beerenwinkel, Tommi
  Mononen, and Ville Mustonen.
\newblock Drug-induced resistance evolution necessitates less aggressive
  treatment.
\newblock {\em PLoS computational biology}, 17(9):e1009418, sep 2021.

\bibitem{Goldie1979}
J.~H. Goldie and A.~J. Coldman.
\newblock {A mathematic model for relating the drug sensitivity of tumors to
  their spontaneous mutation rate}.
\newblock {\em Cancer treatment reports}, 63(11-12):1727--1733, nov-dec 1979.

\bibitem{Benzekry2014}
S{\'{e}}bastien Benzekry, Clare Lamont, Afshin Beheshti, Amanda Tracz, John
  M.~L. Ebos, Lynn Hlatky, and Philip Hahnfeldt.
\newblock {Classical Mathematical Models for Description and Prediction of
  Experimental Tumor Growth}.
\newblock {\em PLoS Computational Biology}, 10(8):e1003800, aug 2014.

\bibitem{Viossat2020}
Yannick Viossat and Robert Noble.
\newblock The logic of containing tumors.
\newblock {\em bioRxiv}, 2020.

\end{thebibliography}
\bibliographystyle{unsrt}
\end{document}